\documentclass[a4paper,12pt]{article}
\usepackage{amsfonts}
\usepackage{amstext}
\usepackage{amsthm}
\usepackage{amsmath}
\usepackage{amssymb}
\usepackage{latexsym}
\usepackage{graphicx}
\usepackage{caption}
\usepackage[hidelinks]{hyperref}
\usepackage{enumitem}
\bibliographystyle{plain}
\usepackage[utf8]{inputenc}
\textwidth=18cm
\textheight=24.5cm
\hoffset -2.1cm
\voffset -2.5cm

\newcommand{\R}{\Bbb{R}}
\newcommand{\N}{\Bbb{N}}

\newcommand{\s}{\Bbb{S}}

\newtheorem{teor}{Theorem}[section]
\newtheorem{propo}{Proposition}[section]
\newtheorem{defi}{Definition}[section]

\newtheorem{rem}{Remark}[section]
\newtheorem{ex}{Example}[section]

\newcommand{\n}{\noindent}
\newcommand{\vs}{\vspace}

\DeclareMathOperator{\spn}{span}
\DeclareMathOperator{\interior}{int}


\begin{document}

\title{Sharp isoanisotropic estimates for fundamental frequencies of membranes and connections with shapes
\footnote{2020 Mathematics Subject Classification: 49J20, 49J30, 47J20}
\footnote{Key words: Spectral optimization, anisotropic optimization, shape optimization, isoanisotropic inequalities, anisotropic rigidity, anisotropic extremizers}
}

\author{\textbf{Raul Fernandes Horta \footnote{\textit{E-mail addresses}: raul.fernandes.horta@gmail.com
 (R. F. Horta)}}\\ {\small\it Departamento de Matem\'{a}tica,
Universidade Federal de Minas Gerais,}\\ {\small\it Caixa Postal
702, 30123-970, Belo Horizonte, MG, Brazil}\\
\textbf{Marcos Montenegro \footnote{\textit{E-mail addresses}:
montene@mat.ufmg.br (M. Montenegro)}}\\ {\small\it Departamento de Matem\'{a}tica,
Universidade Federal de Minas Gerais,}\\ {\small\it Caixa Postal
702, 30123-970, Belo Horizonte, MG, Brazil}}

\date{}

\maketitle

\markboth{abstract}{abstract}

\hrule \vspace{0,2cm}

\n {\bf Abstract}

The underlying motivation of the present work lies on a cornerstone question in spectral optimization that consists of determining sharp lower and upper uniform estimates for fundamental frequencies of a set of uniformly elliptic operators on a fixed membrane. We solve completely the problem in the plane for the general class of anisotropic operators in divergence form generated by arbitrary norms, which also includes the computation of optimal constants and the characterization of corresponding anisotropic extremizers (if they exist). Our approach is based on an isoanisotropic optimization formulation which, in turn, demands to be addressed within the broader environment of nonnegative, convex and $1$-homogeneous anisotropies. A fine and detailed analysis of least energy levels associated to anisotropies with maximum degeneracy leads to a central connection between shapes and fundamental frequencies of rather degenerate elliptic operators. Such a linking also permits to establish that the supremum of anisotropic fundamental frequencies over all fixed-area membranes is infinite for any nonzero anisotropy. As a by-product, the well-known maximization conjecture for fundamental frequencies of the $p$-Laplace operator is proved for any $p \neq 2$.


\pagebreak

\vspace{0.5cm}
\hrule\vspace{0.2cm}

\tableofcontents

\pagebreak

\section{Introduction}

\subsection{Overview}\label{ssec: Overview}

A very old central line in spectral optimization consists in the study of several problems relating shape of domains and/or elliptic operators to the corresponding spectra under different boundary conditions. A landmark chapter within this field revolves around the celebrated Faber-Krahn isoperimetric inequality conjectured in the physical context in 1887 by Rayleigh in his book \cite{R}:

\begin{center}
{\it Among all drums with membranes of fixed area,\\ the circular drum produces the lowest frequency of sound.}
\end{center}
This is one among several interesting physical situations that can be mathematically modeled as follows.

For a bounded domain $\Omega$ in $\R^n$, consider the first Dirichlet eigenvalue $\lambda_1(\Omega)$ associated to the Laplace operator

\[
-\Delta := - \sum_{i = 1}^n \frac{\partial^2}{\partial {x_i}^2},
\]
also called {\it fundamental frequency} of the domain $\Omega$, often called {\it membrane}.

The referred inequality states that

\begin{equation} \label{FK}
\lambda_1(\Omega) \geq \lambda_1(B) \tag{FK}
\end{equation}
for every $\Omega$ with $\vert \Omega \vert = \vert B \vert$, where $B$ denotes the unit $n$-Euclidean ball and $\vert \cdot \vert$ stands for the Lebesgue measure of a measurable subset of $\R^n$. Moreover, equality holds in \eqref{FK} if and only if $\Omega$ is equal to $B$, up to a translation and a set of zero capacity. As the name itself suggests, the inequality was independently proved by Faber \cite{F} and Krahn \cite{Kr} using the variational characterization of eigenvalues and symmetrization techniques.

Optimizing shapes within spectral optimization linked to second-order elliptic operators is, at the same time, fascinating and generally difficult, which has aroused great interest in the mathematical community. A number of counterparts of \eqref{FK}, all motivated by physical applications, has been established over the past three decades. Regarding the first nonzero eigenvalue of the Laplace operator under different boundary conditions, we select some quite famous inequalities:

\begin{itemize}
\item[$\bullet$] Bossel-Daners inequality \cite{B, BD, D, DK} (Robin condition) (see also \cite{DPG, DPP});
\item[$\bullet$] Szegö-Weinberger inequality \cite{S, W} (Neumann condition) (see also \cite{BH});
\item[$\bullet$] Brock-Weinstock inequality \cite{Br, We} (Steklov condition).
\end{itemize}
Faber-Krahn type inequalities have also been proved for other elliptic equations/operators as for example the Schrödinger equation \cite{DCH}, the $p$-Laplace operator \cite{AFT}, anisotropic operators \cite{BFK}, the fractional Laplace operator \cite{BLP} and local-nonlocal type operators \cite{BDVV}. Other related issues such as quantitative forms and stability of geometric inequalities with respect to domains have been extensively discussed for the spectrum of elliptic operators. Among a diverse literature, we refer to \cite{BDPV, BFNT, FZ} for some quantitative variants and \cite{BCV, FMP, HN, M} for results on stability. For an overview about miscellaneous problems, improvements and open questions on several related topics, we recommend the excellent books \cite{BB, H1, H3} and the collection of contributions \cite{H2}.

When we drop out the constraint $\vert \Omega \vert = \vert B \vert$ in \eqref{FK}, thanks to the scaling property of $\lambda_1(\Omega)$ with respect to $\Omega$, the Faber-Krahn inequality takes the form

\begin{equation} \label{FK1}
\lambda_1(\Omega) \geq \vert B \vert^{2/n} \lambda_1(B) \, \vert \Omega \vert^{-2/n},
\end{equation}
which is also very relevant from the analytical viewpoint once it provides a sharp lower estimate for the fundamental frequency $\lambda_1(\Omega)$ in terms of the measure of $\Omega$ with universal optimal constant $\vert B \vert^{2/n} \lambda_1(B)$. In this sense, another direction that has gained a lot of traction is the study of sharp uniform estimates of fundamental frequencies associated with a given set of elliptic operators defined in a fixed membrane $\Omega$. In more precise terms, let $\Omega$ be a bounded domain in $\R^n$ and consider a broad set ${\cal E}(\Omega)$ of second-order uniformly elliptic operators on $\Omega$ so that each operator ${\cal L} \in {\cal E}(\Omega)$ admits at least a nonzero fundamental frequency $\lambda_1({\cal L})$ under some a priori fixed boundary condition (e.g. Dirichlet, Neumann, Robin). A counterpart of \eqref{FK1} associated with the set ${\cal E}(\Omega)$ relies on a suitable way of ``measuring" operators ${\cal L} \in {\cal E}(\Omega)$ by some quantity denoted say by ${\cal M}({\cal L})$.

Two keystone questions in optimization of ``shapes" of operators in ${\cal E}(\Omega)$ are:

\begin{enumerate}[label=(\Alph*)]
\item\label{itemA} Are there any explicit optimal constants $\Lambda^{\min}(\Omega)$ and $\Lambda^{\max}(\Omega)$ such that

\begin{equation} \label{SSE}
\Lambda^{\min}(\Omega)\, {\cal M}({\cal L}) \leq \lambda_1({\cal L}) \leq \Lambda^{\max}(\Omega)\, {\cal M}({\cal L})
\end{equation}
for every operator ${\cal L} \in {\cal E}(\Omega)$?

\item\label{itemB} Are there operators ${\cal L} \in {\cal E}(\Omega)$ yielding any equality in \eqref{SSE}? Can such operators be characterized?
\end{enumerate}
Logically, the measure ${\cal M}({\cal L})$ is expected to be introduced naturally so that the lower constant $\Lambda^{\min}(\Omega)$ is positive and the upper $\Lambda^{\max}(\Omega)$ is finite. Both inequalities in \eqref{SSE} provide sharp uniform bounds for nonzero fundamental frequencies $\lambda_1({\cal L})$ over all operators in ${\cal E}(\Omega)$. Notice also that the first of them can be seen as an analytical parallel of the Faber-Krahn geometric inequality. An operator ${\cal L} \in {\cal E}(\Omega)$ that solves \ref{itemB} is called an extremizer of \eqref{SSE}.

Within that spirit, Essen \cite{E} started in the 80s the study in dimension $n = 1$ of lower and upper uniform estimates for Dirichlet eigenvalues $\lambda_k(V)$ of the Schrödinger operator

\[
{\cal L}_S u = -\hbar \Delta u + V(x) u\ \ {\rm in}\ \Omega,
\]
where $\hbar$ denotes the Planck's constant and $V$ is a potential function satisfying the constraints

\[
\Vert V \Vert_{L^p(\Omega)} \leq \kappa\ \ {\rm and}\ \ \kappa_1 \leq V \leq \kappa_2\ \ {\rm in}\ \Omega
\]
for fixed constants $\kappa$, $\kappa_1$ and $\kappa_2$. Many advances have been made over decades toward non-sharp and sharp uniform estimates in any dimension. We refer for example to \cite{CMZ, EK, E, K, KS, N, T, ZW} for the one-dimensional case and \cite{AM, BBV, BGRV, CGIKO, Eg, H, MRR} and Chapter 9 of \cite{H2} for higher dimensions.

In a similar line, sharp uniform estimates have also been obtained for Dirichlet eigenvalues $\lambda_k(\sigma)$ of the conductivity operator

\[
 {\cal L}_C u = -{\rm div}(\sigma(x) \nabla u)\ \ {\rm in}\ \Omega
\]
for coefficients $\sigma$ normalized simultaneously by the above $L^p$ and uniform restrictions with constants $\kappa_1, \kappa_2 > 0$, see \cite{Ba, CL, EKo, Tr} for some developments.

On the other hand, relatively little is known about spectral optimization dealing with a broader set ${\cal E}(\Omega)$ of linear and nonlinear elliptic operators in any dimension $n$. We develop a comprehensive theory in dimension $n = 2$ for the general class of homogeneous anisotropic elliptic operators. The questions \ref{itemA} and \ref{itemB} are completely solved for fairly general membranes $\Omega$ by means of an appropriate optimization framework. Part of the solution requires a fine analysis of fundamental frequencies associated to operators with maximum degeneracy. The strength of our results allows us to give a simple proof of the well-known maximization conjecture, open for any $p \neq 2$, which states that the supremum of all fundamental frequencies of the $p$-Laplacian over fixed area membranes is infinite. In this new context, the first inequality in \eqref{SSE} represents the isoanisotropic counterpart of the Faber-Krahn isoperimetric inequality \eqref{FK1} in the plane. The results obtained here are quite complete in the sense that all optimal constants and extremal operators are exhibit explicitly.

This paper is the first that aims to establish a satisfactory optimization theory in dimension 2 within the proposed program \ref{itemA} and \ref{itemB}. The picture changes drastically in higher dimensions and new shape phenomena arise, we refer to the work \cite{HM} that is currently underway.

\subsection{2D anisotropies}

In this subsection we will present the appropriate anisotropic environment for the development of the program \ref{itemA} and \ref{itemB}, including the characterization of degenerate prototypes, which will be useful later.

Consider the following class of functions, called here $2D$ {\it anisotropies}:

\[
{\cal H} = \left\{H \colon \R^2 \to \R \colon H \ \text{is nonnegative, convex and $1$-homogeneous}\right\},
\]
where the space $\R^{2}$ is assumed to be endowed with the usual Euclidean norm denoted by $|\cdot|$.

Recall that a function $H$ is said to be convex if satisfies

\[
H(t(x,y) + (1-t)(z,w)) \leq tH(x,y) + (1-t)H(z,w)
\]
for all $t \in [0,1]$ and $(x,y), (z,w) \in \R^2$, and is $1$-homogeneous if

\[
H(t(x,y)) = \vert t \vert H(x,y)
\]
for all $t \in \R$ and $(x,y) \in \R^2$.

Some typical examples of functions in ${\cal H}$ are:

\begin{ex} \label{ex1}

Seminorms and Norms:

\n $\blacktriangleright$ $H(x,y) = \vert x \vert$ and $H(x,y) = \vert y \vert$.\\ More generally:\\
\indent $H(x,y) = \vert \cos{\theta}\, x + \sin{\theta}\, y\, \vert$ for an angle $\theta \in [0,\pi]$.

\n $\blacktriangleright$ $H(x,y) = \vert (x,y) \vert = (x^2 + y^2)^{1/2}$.\\ More generally:\\
\indent $H(x,y) = \left( \vert x \vert^p + \vert y \vert^p \right)^{1/p}$ for a parameter $p \geq 1$,\\
\indent $H(x,y) = \vert (x,y)^T A\, (x,y) \vert^{1/2}$ for an invertible $2 \times 2$ matrix $A$,\\
\indent $H(x,y) = \Vert (x,y) \Vert$ for an arbitrary norm $\Vert \cdot \Vert$.
\end{ex}

Note that ${\cal H}$ can be seen as the set of all seminorms on $\R^2$. However, a convenient and clever way to view ${\cal H}$ is as a closed subset of the Banach space $X$ of all $1$-homogeneous continuous functions equipped with the norm

\[
\lVert H \rVert = \max\{ H(x,y):\ |(x,y)| = 1\}.
\]
As will become clear later, this closure property explains why we choose ${\cal H}$, rather than the smaller set of all positive convex anisotropies, in order to successfully carry out the spectral optimization program \ref{itemA} and \ref{itemB}.

According to the examples above, it is natural to decompose ${\cal H}$ into the sets of positive and degenerate $2D$ anisotropies to be denoted respectively by

\begin{eqnarray*}
&& {\cal H}^P:= \left\{H \in {\cal H}:\ H(x,y) > 0,\ \forall (x,y) \neq (0,0)\right\},\\
&& {\cal H}^D:= \left\{H \in {\cal H}:\ H(x,y) = 0\ \text{for some}\ (x,y) \neq (0,0)\right\}.
\end{eqnarray*}
The set ${\cal H}^P$ is open but is not closed in $X$ and ${\cal H}^D$ is closed in $X$. Besides, it is clear that ${\cal H}^P$ is the set of all norms on $\R^2$.

The set ${\cal H}^D$ is easily characterizable as:

\begin{propo} \label{P1}
The set of all degenerate $2D$ anisotropies is characterized as

\[
{\cal H}^D= \left\{H(x,y) = c \vert \cos{\theta}\, x + \sin{\theta}\, y\, \vert:\ c \geq 0,\ \theta \in [0,\pi]\right\}.
\]
\end{propo}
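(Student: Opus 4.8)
The inclusion $\supseteq$ is immediate: any function of the form $H(x,y) = c\vert\cos\theta\,x + \sin\theta\,y\vert$ is nonnegative, $1$-homogeneous, convex (being a nonnegative multiple of the absolute value of a linear functional), and vanishes on the nonzero vector $(-\sin\theta,\cos\theta)$, hence lies in ${\cal H}^D$. So the real content is the reverse inclusion $\subseteq$.

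For the inclusion $\subseteq$, the plan is to start from an arbitrary $H \in {\cal H}^D$ and exploit the vector where it degenerates. By hypothesis there is $v_0 \neq 0$ with $H(v_0) = 0$; normalizing, assume $\vert v_0\vert = 1$ and write $v_0 = (-\sin\theta_0, \cos\theta_0)$ for some $\theta_0 \in [0,\pi)$ (the sign ambiguity is absorbed by $1$-homogeneity). Pick the unit vector $w_0 = (\cos\theta_0, \sin\theta_0)$ orthogonal to $v_0$, so that $\{v_0, w_0\}$ is an orthonormal basis of $\R^2$. Every $(x,y)$ decomposes as $(x,y) = s\,v_0 + t\,w_0$ with $t = \cos\theta_0\, x + \sin\theta_0\, y$. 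First I would show $H(s v_0 + t w_0) = \vert t\vert\, H(w_0)$ for all $s,t$, which gives exactly the claimed form with $c = H(w_0) \geq 0$ and $\theta = \theta_0$.

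To prove that identity, the key step is to show $H$ is constant along the direction $v_0$, i.e. $H(u + s v_0) = H(u)$ for every $u$ and every $s \in \R$. This is where convexity plus the vanishing at $v_0$ does the work: convexity and $1$-homogeneity of $H$ give subadditivity, $H(\xi + \zeta) \leq H(\xi) + H(\zeta)$, so $H(u + s v_0) \leq H(u) + \vert s\vert H(v_0) = H(u)$; applying the same bound to $u = (u + s v_0) + (-s v_0)$ yields $H(u) \leq H(u + s v_0)$, hence equality. Consequently $H(s v_0 + t w_0) = H(t w_0) = \vert t\vert H(w_0)$ by $1$-homogeneity, completing the argument. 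Finally, if $\theta_0 = 0$ one may need a cosmetic adjustment to keep $\theta \in [0,\pi]$, but since $\vert\cos 0\cdot x + \sin 0\cdot y\vert = \vert x\vert$ is already covered, and the endpoints $\theta = 0$ and $\theta = \pi$ give the same function, the stated range $[0,\pi]$ is harmless.

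The main obstacle is essentially bookkeeping rather than substance: one must be careful that the degeneracy direction is a genuine line (not just a ray) — this is automatic from $1$-homogeneity since $H(v_0) = 0 \Rightarrow H(-v_0) = \vert-1\vert H(v_0) = 0$ — and that the parametrization $(c,\theta)$ ranges exactly over $c \geq 0$, $\theta \in [0,\pi]$ without double counting beyond the trivial $c = 0$ case and the $\theta = 0$ versus $\theta = \pi$ identification. The analytic heart, the translation-invariance of $H$ along the kernel direction, follows in two lines from subadditivity, so no delicate estimates are involved.
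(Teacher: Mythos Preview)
Your proof is correct and follows essentially the same approach as the paper: both arguments exploit subadditivity (from convexity plus $1$-homogeneity) to show that $H$ is invariant under translation in the kernel direction $v_0$, and then conclude via $1$-homogeneity that $H$ depends only on the orthogonal component. The paper phrases this by first composing with a rotation matrix to reduce to the case $H(1,0)=0$, whereas you work directly in the orthonormal basis $\{v_0,w_0\}$; this is a purely cosmetic difference.
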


\begin{proof} Let $H \in {\cal H}^D \setminus \{0\}$. One knows that there is $\theta \in [0,\pi]$ such that $H(-\sin{\theta},\cos{\theta}) = 0$. Let $T_\theta$ be the rotation matrix associated to the angle $\frac{\pi}{2}+\theta$, then

\begin{align*}
& T_\theta(1,0) = \left(-\sin{\theta},\cos{\theta}\right), \\
& T_\theta(0,1) = \left(-\cos{\theta},-\sin{\theta}\right).
\end{align*}
Now define $H_\theta(x,y) := H(T_\theta(x,y))$. It is clear that $H_\theta(1,0) = 0$. Thus, for any $(x,y) \in \R^2$, we have

\begin{align*}
   & H_\theta(x,y) \leq H_\theta(x,0) + H_\theta(0,y) = |x|H_\theta(1,0) + H_\theta(0,y) = H_\theta(0,y), \\
   & H_\theta(0,y) \leq H_\theta(x,y) + H_\theta(-x,0) = H_\theta(x,y) + |x|H_\theta(1,0) = H_\theta(x,y). \\
\end{align*}
Then, it follows that $H_\theta(x,y) = H_\theta(0,y)$, in other words, we have

\[
H_\theta(x,y) = |y|H_\theta(0,1) = H_\theta(0,1)\, |\langle (x,y) , (0,1) \rangle|.
\]
From this equality, we derive

\begin{align*}
H(x,y) &= H(T_\theta(T_\theta^{t}(x,y))) = H_\theta(T_\theta^{t}(x,y)) = H_\theta(0,1)\, |\langle T_\theta^{t}(x,y) , (0,1) \rangle| \\
&= H_\theta(0,1)\, |\langle (x,y), T_\theta(0,1)\rangle| = H_\theta(0,1)\, |\langle (x,y), \left(-\cos{\theta},-\sin{\theta}\right)\rangle| \\
& = H_\theta(0,1)\, |-x\cos{\theta}-y\sin{\theta}| = H_\theta(0,1)\, |x\cos{\theta}+y\sin{\theta}|.
\end{align*}
Therefore, the conclusion follows with $c = H_\theta(0,1) > 0$.
\end{proof}

\subsection{Anisotropic least levels versus fundamental frequencies}\label{ssec: Anisotropic least levels versus fundamental frequencies}
We now raise some natural and important questions dealing with anisotropic energy levels as part of the path that will lead us to the most relevant questions, as well as their solutions. Some partial answers will still be provided in this subsection, while others will be given later.

Given a real number $p>1$, a membrane $\Omega \subset \R^2$ and an $2D$ anisotropy $H \in {\cal H}$, we introduce the anisotropic $L^p$ energy associated to $H$ as the functional ${\cal E}_{p,H} \colon W^{1,p}_{0}(\Omega) \to \R$ defined by

\[
{\cal E}_{p,H}(u) := \iint_{\Omega} H^{p}(\nabla u) \, dA,
\]
where $\nabla u$ stands for the weak gradient of $u$ and $W^{1,p}_{0}(\Omega)$ denotes the completion of compactly supported smooth functions in $\Omega$ with respect to the norm

\[
\lVert u \rVert_{W^{1,p}_{0}(\Omega)} = \left(\iint_{\Omega} | \nabla u|^{p} \, dA\right)^{\frac{1}{p}}.
\]
The anisotropic least energy level associated to ${\cal E}_{p,H}$ on the unit sphere in $L^p(\Omega)$ is defined as

\begin{equation}\label{frequency}
\lambda_{1,p}^{H}(\Omega) := \inf\left\{{\cal E}_{p,H}(u) \colon\ u \in W^{1,p}_0(\Omega), \ \lVert u \rVert_{p} = 1\right\},
\end{equation}
where
\[
\lVert u \rVert_p = \left(\iint_{\Omega} |u|^{p} \, dA\right)^{\frac{1}{p}}.
\]

Before we begin discussing issues surrounding $\lambda_{1,p}^{H}(\Omega)$, we have gathered some basic properties that follow directly from the definition and that will be used throughout the work.

\begin{enumerate}[label=(P\arabic*)]
    \item\label{PropertyP1} If $H \in {\cal H}$ and $\alpha \in [0,+\infty)$ then $\alpha H \in {\cal H}$ and $\lambda_{1,p}^{\alpha H}(\Omega) = \alpha^p \lambda_{1,p}^{H}(\Omega)$ for any membrane $\Omega$;

    \item\label{PropertyP2} If $H \in {\cal H}$ and $\Omega_{1}$ and $\Omega_{2}$ are membranes such that $\Omega_{1} \subseteq \Omega_{2}$ then $\lambda_{1,p}^{H}(\Omega_{1}) \geq \lambda_{1,p}^{H}(\Omega_{2})$;

    \item\label{PropertyP3} If $G$ and $H$ are functions in ${\cal H}$ such that $G \leq H$, then $\lambda_{1,p}^{G}(\Omega) \leq \lambda_{1,p}^{H}(\Omega)$ for any membrane $\Omega$.
\end{enumerate}

Given a membrane $\Omega$, an $2D$ anisotropy $H \in {\cal H}$ (resp. ${\cal H}^P, {\cal H}^D$), a rotation matrix $A$ and a function $u \in W^{1,p}_{0}(\Omega)$, we denote $\Omega_{A} = A(\Omega)$, $H_{A} = H \circ A$ and $u_A$ the function given by $u_{A}(x,y) = u(A^T(x,y))$. It is clear that $H_{A} \in {\cal H}$ (resp. ${\cal H}^P, {\cal H}^D$) and $u_A \in W^{1,p}_{0}(\Omega_A)$.

We also consider a fourth useful property:

\begin{enumerate}[label=(P\arabic*),start=4]
    \item\label{PropertyP4} If $H \in {\cal H}$ then $\lambda_{1,p}^{H}(\Omega) = \lambda_{1,p}^{H_A}(\Omega_A)$ for any rotation matrix $A$. Moreover, $u \in W^{1,p}_{0}(\Omega)$ is a minimizer of $\lambda_{1,p}^{H}(\Omega)$ if and only if $u_A \in W^{1,p}_{0}(\Omega_A)$ is a minimizer of $\lambda_{1,p}^{H_A}(\Omega_A)$.
\end{enumerate}
The latter follows readily from the following relations, after using the change of variable $(z,w) = A(x,y)$ for $(x,y) \in \Omega$,

\[
\iint_{\Omega_A} H_{A}^{p}(\nabla u_{A}(z,w))\, dA = \iint_{A(\Omega)} H^p(A A^T \nabla u (A^T (z,w))\, dA = \iint_{\Omega} H^p(\nabla u(x,y))\, dA
\]
and

\[
\iint_{\Omega_{A}} |u_{A}(z,w)|^p\, dA = \iint_{\Omega} |u(x,y)|^p\, dA.
\]

Back to our main focus, a first question related to $\lambda_{1,p}^{H}(\Omega)$ already arises:

\begin{center}
\it{For which $2D$ anisotropies $H \in {\cal H}$ is $\lambda_{1,p}^{H}(\Omega)$ nonzero?}
\end{center}

The complete answer to this query is given in a simple way in the following result:

\begin{propo} \label{P.1}
Let $\Omega$ be any membrane and let $H \in {\cal H}$. The statements are equivalent:

\begin{itemize}
\item[(a)] The level $\lambda_{1,p}^{H}(\Omega)$ is positive;
\item[(b)] The anisotropy $H$ is nonzero at some point.
\end{itemize}
\end{propo}

\begin{proof}
Since clearly $\lambda_{1,p}^{H}(\Omega) = 0$ for $H = 0$, it suffices to prove that (b) implies (a). Assume then that $H$ is nonzero at some point. We analyze two possibilities. For $H \in {\cal H}^P$, by continuity and homogeneity, there is a constant $c > 0$ such that $H(x,y) \geq c \vert (x,y) \vert \geq c \vert y \vert$ for every $(x,y) \in \R^2$. For $H \in {\cal H}^D \setminus \{0\}$, from Proposition \ref{P1}, we know that $H(x,y) = c \vert \cos{\theta}\, x + \sin{\theta}\, y\, \vert$ for some constant $c >0$. Thus, if $A$ is the rotation matrix of angle $\theta - \frac{\pi}{2}$, then $H_A(x,y) = c \vert y \vert$. In any case, there is an orthogonal matrix $A$ (possibly the identity matrix) such that $H_A(x,y) \geq c \vert y \vert$ where $c > 0$. From Properties \ref{PropertyP1}, \ref{PropertyP3} and \ref{PropertyP4}, it suffices to consider $H(x,y) = \vert y \vert$. Furthermore, if $R = (a,b) \times (c,d)$ is a convenient rectangle so that $\Omega \subset R$, by Property \ref{PropertyP2}, it suffices to show that $\lambda_{1,p}^{H}(R) > 0$.

On the other hand, by the one-dimensional Poincaré inequality, we have

\begin{eqnarray*}
\iint_{R} H^p(\nabla u(x,y))\, dA &=& \int_a^b \int_c^d \vert \frac{\partial u}{\partial y}(x,y) \vert^p\, dy\, dx \geq \int_a^b \lambda_{1,p}(c,d) \int_c^d \vert u(x,y) \vert^p\, dy\, dx  \\
&=& \lambda_{1,p}(c,d) \iint_{R} \vert u(x,y) \vert^p\, dA
\end{eqnarray*}
for every $u \in C^1_0(R)$, so that by density $\lambda_{1,p}^{H}(R) \geq \lambda_{1,p}(c,d)  > 0$.
\end{proof}

A second natural question is

\begin{center}
\it{For which $2D$ anisotropies $H \in {\cal H}$ is the level $\lambda_{1,p}^{H}(\Omega)$ attained\\ by some $L^p$-normalized function $u_p \in W^{1,p}_{0}(\Omega)$?}
\end{center}
Admitting for a moment that such a minimizer $u_p$ exists, the next step is to know whether it satisfies any Euler-Lagrange equation. This is the case for instance when $H \in C^1(\R^2 \setminus \{(0,0)\})$. In this case, the anisotropic energy ${\cal E}_{p,H}$ is Gateaux differentiable in $W^{1,p}_0(\Omega)$ and, as can easily be checked, the function $u_p$ is a weak solution of the equation

\begin{equation} \label{EP}
-\Delta_p^H u = \lambda_{1,p}^H(\Omega) |u|^{p-2}u \ \text{in} \ \Omega,
\end{equation}
where $-\Delta_{p}^{H}$ denotes the quasilinear elliptic operator in divergence form

\[
-\Delta_p^H u := -{\rm div}\left(H^{p-1}\left(\nabla u\right)\nabla H\left(\nabla u\right)\right),
\]
which will henceforth be called {\it $H$-anisotropic $p$-Laplace operator}. A function $u_p \in W^{1,p}_0(\Omega)$ is said to be solution of \eqref{EP} in the weak sense if
\[
\iint_{\Omega} H^{p-1}\left(\nabla u_p\right)\nabla H\left(\nabla u_p\right)\cdot\nabla\varphi \, dA = \lambda_{1,p}^{H}(\Omega)\iint_{\Omega}|u_p|^{p-2}u_p\varphi \, dA
\]
for every test function $\varphi \in W^{1,p}_0(\Omega)$.

At this point it should be remarked that the gradient $\nabla H$ of $2D$ anisotropies exists almost everywhere and is bounded in $\R^2$, so that the integral on the left-hand side above, if well defined, is always finite. So, in general, the level $\lambda_{1,p}^H(\Omega)$ is called {\it $H$-anisotropic fundamental $p$-frequency} (or just {\it fundamental frequency}) of the membrane $\Omega$ if \eqref{EP} admits a nontrivial weak solution $u_p \in W_0^{1,p}(\Omega)$ which is called {\it membrane eigenfunction} associated to $\lambda_{1,p}^H(\Omega)$.

Consequently, as a continuation of the second question, we are right away led to:

\begin{center}
\it{For which $2D$ anisotropies $H \in {\cal H}$ is the level $\lambda_{1,p}^{H}(\Omega)$ a fundamental frequency?}
\end{center}

As it is well known, the second question is affirmative for any $H \in {\cal H}^P$ and the third under the additional assumption that $H$ is of $C^1$ class in $\R^2$ outside the origin (e.g. \cite{BFK}). The novelty here is that this latter is actually necessary in the non-degenerate case. In precise terms, we have:

\begin{teor} \label{T6}
Let $H \in {\cal H}^P$. The statements are equivalent:

\begin{itemize}
\item[(a)] The level $\lambda_{1,p}^{H}(\Omega)$ is a fundamental frequency for any membrane $\Omega$;
\item[(b)] The anisotropy $H$ is of $C^1$ class in $\R^2 \setminus \{(0,0)\}$.
\end{itemize}
\end{teor}

\begin{proof} As pointed out above, it suffices to prove that $(a)$ implies $(b)$. Assume by contradiction that $(b)$ is false. Consider the convex body ($H \in {\cal H}^P$)

\[
D_H = \left\{ (x,y) \in \R^2:\ H(x,y) \leq 1\right\}
\]
and its support function

\[
H^\circ(x,y) = \max\{xz + yw:\ (z,w) \in D_H\},
\]
which it is also a norm on $\R^2$. The polar convex body of $D_H$ is defined by

\[
D^\circ_H = \left\{ (x,y) \in \R^2:\ H^\circ(x,y) \leq 1\right\}.
\]
Set $\Omega = D_{H}^{\circ}$ and take a membrane eigenfunction $u_p \in W^{1,p}_0(\Omega)$ associated to $\lambda_{1,p}^{H}(\Omega)$. By Theorem 3.1 of \cite{AFTL}, we have

\begin{equation}\label{ineqconvexsym}
\lambda_{1,p}^{H}(\Omega) = \iint_{\Omega} H^{p}(\nabla u_p)\, dA \geq \iint_{\Omega} H^{p}(\nabla u_p^{\star})\, dA
\end{equation}
and $\Vert u_p^{\star} \Vert_{p} = \Vert u_p \Vert_{p} = 1$, where $u_p^{\star}$ denotes the convex rearrangement of $u_p$ with respect to $H$, which also belongs to $W^{1,p}_0(\Omega)$. Then, \eqref{ineqconvexsym} is actually an equality and therefore $u_p = u_p^{\star}$. Hence, there is a continuous function $l_p$ differentiable almost everywhere in $\R$ that satisfies

\[
u_p(x,y) = l_p(H^{\circ}(x,y)).
\]
Since $H$ is Lipschitz in $\R^2$ note that $H \in C^{1}(\R^2 \setminus \{(0,0)\})$ if and only if $H$ is Gateaux differentiable at every point in $\R^2 \setminus \{(0,0)\}$. Suppose by contradiction that $H$ is not Gateaux differentiable at some point $(z,w) \neq (0,0)$. Consider the subdifferential $\partial H(z,w)$ of $H$ at the same point, that is characterized as

\begin{equation}\label{normsubdiff}
\partial H(z,w) = \left\{(x,y) \in \R^2 \colon\ z x + w y = H(z,w) \ \text{and} \ H^{\circ}(x,y) = 1 \right\}.
\end{equation}
From the Gateaux non-differentiability of $H$ at $(z,w)$, the set $\partial H(z,w)$ contains at least two linearly independent vectors and, by convexity, it contains at least a line segment that does not pass through the origin. Consequently, the conical set

\[
{\cal C}_H(z,w) = \left\{\alpha (x,y) \in \R^2:\  \forall (x,y) \in \partial H(z,w), \ \forall \alpha \in \R\right\}
\]
has positive measure and thus $H^{\circ}$ is differentiable in almost every point in it.

On the other hand, from \eqref{normsubdiff}, we have

\[
{\cal C}_H(z,w) = \left\{(x,y) \in \R^2 \colon\ z x + w y = H(z,w) H^{\circ}(x,y)\right\}.
\]
Choose now a point $(x_0,y_0) \in {\cal C}_H(z,w)$ such that $H^{\circ}$ is differentiable in $(x_0,y_0)$. Notice that this is a maximum point of the function $F(x,y) = z x + w y$ restricted to the set $H^{\circ}(x,y) = H(x_0,y_0)$. So, by Lagrange multipliers, we have that there is $\lambda \in \R$ such that

\[
\nabla H^{\circ}(x_0,y_0) = \lambda \nabla F(x_0,y_0) = \lambda (z,w),
\]
Therefore, we derive

\[
\nabla u_p (x_0,y_0) = l_p^\prime \left(H^{\circ}(x_0,y_0) \right) \nabla H^{\circ}(x_0,y_0) \in \spn{(z,w)} \ \text{a.e in} \ \Omega \cap {\cal C}_H(z,w).
\]
Since $|\Omega \cap {\cal C}_H(z,w)| > 0$, then $\nabla H(\nabla u_p)$ is not well-defined almost everywhere in $\Omega$. Consequently, there is no membrane eigenfunction associated to $\lambda_{1,p}^{H}(\Omega)$ for $\Omega = D_{H}^{\circ}$.
\end{proof}
\n Finally, regarding degenerate $2D$ anisotropies, the second and third questions are deeper and part of this work aims to provide complete answers in this case as well. Among the main statements (of Subsection \ref{ssec: Main statements}), we furnish, for each anisotropy $H \in {\cal H}^D \setminus \{0\}$, the characterization of all shapes $\Omega$ (depending on $H$) where $\lambda_{1,p}^{H}(\Omega)$ are fundamental frequencies. Illustrations of different shapes $\Omega$ will also be exhibited in which $\lambda_{1,p}^{H}(\Omega)$ is or is not a fundamental frequency, including more fine-grained information about the exact number (multiplicity) of degenerate anisotropies $H$ such that $\lambda_{1,p}^{H}(\Omega)$ is a fundamental frequency. More specifically, given any integer number $m \geq 1$, it will be constructed an example of membrane $\Omega$ such that $\lambda_{1,p}^{H}(\Omega)$ is a fundamental frequency for exactly $m$ anisotropies $H \in {\cal H}^D$ normalized by $\Vert H \Vert = 1$.

\subsection{Isoanisotropic problems on a given membrane}

We next introduce anisotropic spectral optimization problems that have as a backdrop the anisotropic Faber-Krahn inequality established by Belloni, Ferone and Kawohl \cite{BFK} for anisotropies $H \in {\cal H}^P$ of $C^1$ class in $\R^2 \setminus \{(0,0)\}$. It states that, for any membrane $\Omega$ with $\vert \Omega \vert = \vert D_H^\circ \vert$,

\begin{equation} \label{AFK}
\lambda^H_{1,p}(\Omega) \geq \lambda^H_{1,p}(D_H^\circ),
\end{equation}
where $D_H^\circ$ is the polar convex body of $D_H = \{(x,y) \in \R^2: H(x,y) \leq 1\}$ as defined in the previous subsection. Moreover, equality holds in \eqref{AFK} if and only if $\Omega$ is equal to $D_H^\circ$, module a translation and a set of zero capacity. Observe that \eqref{AFK} is an inequality of isoperimetric nature that solves and classifies the shape minimization problem

\[
\min \{\lambda^H_{1,p}(\Omega):\, \forall\, \text{membrane\ } \Omega \subset \R^2\ {\rm with}\ \vert \Omega \vert = 1\}.
\]
As for the corresponding maximization problem, we will establish that (see Theorem \ref{T2.1})

\[
\sup \{\lambda^H_{1,p}(\Omega):\, \forall\, \text{membrane\ } \Omega \subset \R^2\ {\rm with}\ \vert \Omega \vert = 1\} = \infty.
\]
When $\Omega$ is fixed and $H$ varies freely in ${\cal H}$, it is natural to ask about the anisotropic parallel of shape optimization and clearly this depends on the kind of ``measurement" adopted for elements $H \in {\cal H}$. There are various possible choices generally related to different ways of measuring the convex set $D_H$. Here we opted for the {\it inradius} measure of this set which just coincides with the inverse of $\Vert H \Vert$ defined in the overview section. For this reason, in what follows, we must elect the notation $\Vert H \Vert$ rather than the {\it inradius} of $D_H$. We recall that the {\it inradius} of a given set in $\R^2$ is the radius of the largest open disk contained in it.

In order to introduce the isoanisotropic optimization problems to fundamental frequencies $\lambda_{1,p}^H(\Omega)$ of a fixed membrane $\Omega$, we consider the unit anisotropic sphere $\s({\cal H}) = \{H \in {\cal H}: \Vert H \Vert = 1\}$.

It basically consists of two types:\\

\vs{-0.4cm}

\begin{center}

{\bf Isoanisotropic minimization:}

\vs{0.2cm}

{\it Among all $2D$ anisotropies $H \in \s({\cal H})$, which of them (if any) produces\\ the lowest fundamental frequency $\lambda_{1,p}^H(\Omega)$?}
\end{center}

\begin{center}

{\bf Isoanisotropic maximization:}

\vs{0.2cm}

{\it Among all $2D$ anisotropies $H \in \s({\cal H})$, which of them (if any) produces\\ the highest fundamental frequency $\lambda_{1,p}^H(\Omega)$?}
\end{center}
In other words, both problems consist in characterizing all anisotropic extremizers of $\lambda_{1,p}^H(\Omega)$ on $\s({\cal H})$.

Consider the optimal anisotropic constants corresponding to each of these problems

\[
\lambda_{1,p}^{\min}(\Omega):= \inf_{H \in \s({\cal H})} \lambda_{1,p}^H(\Omega)\ \ {\rm and}\ \ \lambda_{1,p}^{\max}(\Omega):= \sup_{H \in \s({\cal H})} \lambda_{1,p}^H(\Omega)\, .
\]
It is clear that $0 \leq \lambda_{1,p}^{\min}(\Omega) \leq \lambda_{1,p}^{\max}(\Omega)$. Assuming for now that $\lambda_{1,p}^{\min}(\Omega) > 0$ and $\lambda_{1,p}^{\max}(\Omega)$ is finite, we find two non-vacuum sharp isoanisotropic inequalities:

\begin{center}
{\bf Sharp lower anisotropic inequality:}
\end{center}

\vs{-0.6cm}

\begin{equation} \label{LII}
\lambda_{1,p}^H(\Omega) \geq \lambda_{1,p}^{\min}(\Omega)\ \ \text{for every\ } H \in \s({\cal H}) \tag{LAI}
\end{equation}

\vs{0.1cm}

\begin{center}
{\bf Sharp upper anisotropic inequality:}
\end{center}

\vs{-0.6cm}

\begin{equation} \label{UII}
\lambda_{1,p}^H(\Omega) \leq \lambda_{1,p}^{\max}(\Omega)\ \ \text{for every\ } H \in \s({\cal H}) \tag{UAI}
\end{equation}
The sharp inequality \eqref{LII} is the isoanisotropic counterpart of the Faber-Krahn isoperimetric inequality \eqref{FK} in the plane. The strategy of solution of two above optimization problems will be based on the study of anisotropic extremizers $H \in \s({\cal H})$ for both inequalities \eqref{LII} and \eqref{UII}. More specifically, it will be established that (see Subsection \ref{ssec: Main statements} for precise statements):

\begin{itemize}
\item[(I)\ ] $\lambda_{1,p}^{\min}(\Omega)$ is always positive;

\item[(II)] $\lambda_{1,p}^{min}(\Omega)$ is computed explicitly;

\item[(III)] The set of extremizers of $\lambda_{1,p}^{\min}(\Omega)$ in $\s({\cal H})$, if non-empty, is contained in ${\cal H}^D$ and is completely characterizable. In particular, its cardinality (i.e. multiplicity of anisotropic extremizers of \eqref{LII}) corresponds to the quantity of directions in which the width of $\Omega$ is maximum;

\item[(IV)] $\lambda_{1,p}^{\max}(\Omega)$ is equal to the fundamental frequency $\lambda_{1,p}(\Omega)$ associated to the usual $p$-Laplace operator;

\item[(V)\ ] The set of extremizers of $\lambda_{1,p}^{\max}(\Omega)$ in $\s({\cal H})$ is single. In other words, the inequality \eqref{UII} is rigid.
\end{itemize}
For (I), (II) and (III) we refer to Theorem \ref{T4} and for (IV) and (V) to Theorem \ref{T3}.

\subsection{Sharp uniform estimates for fundamental frequencies}

The anisotropic optimization problems proposed in the previous subsection are closely related to the spectral program \ref{itemA} and \ref{itemB} described in the overview subsection. Indeed, given a number $p > 1$ and a membrane $\Omega$, we set

\[
{\cal E}(\Omega) := \{-\Delta_p^H:\ H \in {\cal H}\}\ \ {\rm and}\ \ {\cal M}(-\Delta_p^H) := \Vert H \Vert^p.
\]
From Property \ref{PropertyP1}, the sharp inequalities \eqref{LII} and \eqref{UII} can be translated to the sharp uniform estimates

\begin{equation} \label{SUE}
\lambda_{1,p}^{\min}(\Omega)\, {\cal M}(-\Delta_p^H) \leq \lambda_{1,p}^H(\Omega) \leq \lambda_{1,p}^{\max}(\Omega)\, {\cal M}(-\Delta_p^H) \tag{U-estimate}
\end{equation}
for every $H$-anisotropic $p$-Laplace operator $-\Delta_p^H \in {\cal E}(\Omega)$.

Among many operators in ${\cal E}(\Omega)$, we select some more canonical ones:

\begin{ex} \label{ex4}
Linear elliptic operators $(p=2)$:

$\blacktriangleright$ (rather degenerate operators) ${\cal L}u = - \frac{\partial^2 u}{\partial x^2}$ and ${\cal L}u = - \frac{\partial^2 u}{\partial y^2}$;

$\blacktriangleright$ (non-degenerate operators) ${\cal L}u = - \Delta u$ (Laplace operator) and ${\cal L}u = - {\rm div}(A \nabla u)$ (operators with constant coefficients in divergence form).
\end{ex}
\n The $2D$ anisotropies associated to each of these linear examples are respectively:

$\triangleright$ $H(x,y) = \vert x \vert$ and $H(x,y) = \vert y \vert$;

$\triangleright$ $H(x,y) = \vert (x,y) \vert$ and $H(x,y) = \vert (x,y)^T A\, (x,y) \vert^{\frac 12}$ for invertible symmetric matrices $A$.

\begin{ex} \label{ex5}
Quasilinear elliptic operators $(p \neq 2)$:

$\blacktriangleright$ (rather degenerate operators) ${\cal L}u = - \frac{\partial}{\partial x}\left( \vert \frac{\partial u}{\partial x} \vert^{p-2} \frac{\partial u}{\partial x} \right)$ ($p$-Laplace operator on $x$) and ${\cal L}u = - \frac{\partial}{\partial y}\left( \vert \frac{\partial u}{\partial y} \vert^{p-2} \frac{\partial u}{\partial y} \right)$ ($p$-Laplace operator on $y$);

$\blacktriangleright$ (degenerate operators) ${\cal L}u = - \Delta_p u$ ($p$-Laplace operator), ${\cal L}u = - \bar{\Delta}_p u$ (pseudo $p$-Laplace operator) and ${\cal L}u = - \Delta_p u - \bar{\Delta}_p u$ (mixed anisotropic operator).
\end{ex}
\n The $2D$ anisotropies associated to each of these nonlinear examples are respectively:

$\triangleright$ $H(x,y) = \vert x \vert$ and $H(x,y) = \vert y \vert$;

$\triangleright$ $H(x,y) = \vert (x,y) \vert$, $H(x,y) = \left( \vert x \vert^p + \vert y \vert^p \right)^{1/p}$ and $H(x,y) = \left( \vert (x,y) \vert^p + \vert x \vert^p + \vert y \vert^p \right)^{1/p}$.\\

\n Lastly, we point out that the list of contributions (I)-(V) mentioned in the previous subsection furnishes a complete solution for the spectral program \ref{itemA} and \ref{itemB} within the homogeneous anisotropic context. In effect, according to the discussion made in Subsection \ref{ssec: Anisotropic least levels versus fundamental frequencies}, both estimates in \eqref{SUE} are sharp for fundamental frequencies associated to uniformly elliptic operators $\Delta_p^H$, whose anisotropy $H \in {\cal H}^P$ we know that necessarily is of $C^1$ class (see Theorem \ref{T6}), with explicit optimal constants $\lambda_{1,p}^{\min}(\Omega)$ and $\lambda_{1,p}^{\max}(\Omega)$. Furthermore, for this class of operators on a slightly smooth membrane $\Omega$, the lower estimate in \eqref{SUE} is always strict, since equality only can occurs for degenerate anisotropic operators, while the upper equality is valid only for multiples of the $p$-Laplace operator.

\subsection{Shapes optimizations}\label{ssec: Shapes optimizations}

In view of the relevance of the anisotropic constants $\lambda_{1,p}^{\min}(\Omega)$ and $\lambda_{1,p}^{\max}(\Omega)$, it is strategic and natural to investigate estimates of them in terms of the membrane $\Omega$. In this subsection we choose to explore two categories of shape optimization associated to these two optimal constants.

The first category is isodiametric in nature and consists of the optimization problems:

\begin{align*}
& \inf\{\lambda_{1,p}^{\min}(\Omega):\, \forall\, \text{membrane\ } \Omega \subset \R^2\ {\rm with}\ {\rm diam}(\Omega) = 1\}, \tag{ID1}\label{ID1} \\
& \sup\{\lambda_{1,p}^{\min}(\Omega):\, \forall\, \text{membrane\ } \Omega \subset \R^2\ {\rm with}\ {\rm diam}(\Omega) = 1\}, \tag{ID2}\label{ID2} \\
& \inf\{\lambda_{1,p}^{\max}(\Omega):\, \forall\, \text{membrane\ } \Omega \subset \R^2\ {\rm with}\ {\rm diam}(\Omega) = 1\}, \tag{ID3}\label{ID3} \\
& \sup\{\lambda_{1,p}^{\max}(\Omega):\, \forall\, \text{membrane\ } \Omega \subset \R^2\ {\rm with}\ {\rm diam}(\Omega) = 1\}. \tag{ID4}\label{ID4}
\end{align*}
As a consequence of our developments, it is possible to guarantee that:

\begin{itemize}
\item[(D1)] The value of \eqref{ID1} is positive and the corresponding optimal shapes consist of membranes that contain at least one open segment whose length is equal to $1$;
\item[(D2)] The value of \eqref{ID2} is infinite;
\item[(D3)] The value of \eqref{ID3} is positive and the corresponding optimal shapes are disks. The underlying isodiametric inequality is stronger than the inequality \eqref{FK} for $\lambda_{1,p}(\Omega)$;
\item[(D4)] The value of \eqref{ID4} is infinite.
\end{itemize}

The second one concerns the isoperimetric optimization problems:

\begin{align*}
& \inf\{\lambda_{1,p}^{\min}(\Omega):\, \forall\, \text{membrane\ } \Omega \subset \R^2\ {\rm with}\ \vert \Omega \vert = 1\}, \tag{IP1}\label{IP1} \\
& \sup\{\lambda_{1,p}^{\min}(\Omega):\, \forall\, \text{membrane\ } \Omega \subset \R^2\ {\rm with}\ \vert \Omega \vert = 1\}, \tag{IP2}\label{IP2} \\
& \inf\{\lambda_{1,p}^{\max}(\Omega):\, \forall\, \text{membrane\ } \Omega \subset \R^2\ {\rm with}\ \vert \Omega \vert = 1\}, \tag{IP3}\label{IP3} \\
& \sup\{\lambda_{1,p}^{\max}(\Omega):\, \forall\, \text{membrane\ } \Omega \subset \R^2\ {\rm with}\ \vert \Omega \vert = 1\}. \tag{IP4}\label{IP4}
\end{align*}
Similarly as above, our tools allow us to provide complete answers to each of them. Indeed, it is possible to deduce that:

\begin{itemize}
\item[(P1)] The value of \eqref{IP1} is zero, so the infimum is never attained for any membrane;
\item[(P2)] The value of \eqref{IP2} is positive on convex bodies and the corresponding optimal shapes are disks;
\item[(P3)] The value of \eqref{IP3} is positive and the problem is equivalent to the inequality \eqref{FK} for $\lambda_{1,p}(\Omega)$;
\item[(P4)] The value of \eqref{IP4} is infinite.
\end{itemize}
Among the above eight problems connected to the study of optimal anisotropic constants, we point out that only Problems \eqref{ID1}, \eqref{ID3}, \eqref{IP2} and \eqref{IP3} give rise to non-vacuum sharp inequalities, the first two being isodiametric and the other two isoperimetric.

The novelties here are truly those resulting from \eqref{ID1} and \eqref{IP2} (see Theorem \ref{T5}). In a precise manner, the first new sharp inequality states, for any membrane $\Omega$, that

\begin{equation} \label{ID-min}
 \lambda_{1,p}^{\min}(\Omega) \geq \lambda_{1,p}(0,1)\, {\rm diam}(\Omega)^{-p} \tag{ID-min}
\end{equation}
and the second ensures, for any convex membrane $\Omega$, that

\begin{equation} \label{IP-min}
\lambda_{1,p}^{\min}(\Omega) \leq \lambda_{1,p}(0,2/\sqrt{\pi})\, \vert \Omega \vert^{-p/2}. \tag{IP-min}
\end{equation}
Surprisingly, both inequalities combined with the scaling property $\lambda_{1,p}(0,L) = L^{-p} \lambda_{1,p}(0,1)$ yield the famous isodiametric inequality from the convex geometry:

\begin{equation} \label{ID}
\vert \Omega \vert \leq \pi \left( \frac{{\rm diam}(\Omega)}{2} \right)^2
\end{equation}
whose equality holds if and only if $\Omega$ is a disk, see for example \cite{Gr} for different proofs of it. In particular, \eqref{ID-min} and \eqref{IP-min} can independently be viewed as stronger variants of \eqref{ID}, once the validity of the latter passes easily from convex to non-convex membranes by means of convex hull.

Consider now a disk $D_0 \subset \R^2$ centered at the origin with unit diameter. The sharp inequality equivalent to Problem \eqref{ID3} asserts, for any membrane $\Omega$, that

\[
\lambda_{1,p}^{\max}(\Omega) \geq \lambda_{1,p}(D_0)\, {\rm diam}(\Omega)^{-p}.
\]
This inequality has been proved for $p=2$ by Bogosel, Henrot and Lucardesi in \cite{BHL} since $\lambda_{1,p}^{\max}(\Omega) = \lambda_{1,p}(\Omega)$. Their argument works step by step for any $p > 1$ once the key ingredients are \eqref{ID} and the inequality \eqref{FK} for $\lambda_{1,p}(\Omega)$, see \cite{AFT}.

Finally, as mentioned before, Problem \eqref{IP3} is just equivalent to the inequality \eqref{FK} for $\lambda_{1,p}(\Omega)$ for any $p > 1$.

We close this subsection with a brief justification of the remainder statements related to the shape optimization problems introduced above.\\

\n {\bf On Problem \eqref{ID2}:}

\begin{itemize}
\item[$\bullet$] (convex membranes) Let $\Omega$ be any convex membrane with ${\rm diam}(\Omega) = 1$. Theorem \ref{T4} ensures that $\lambda_{1,p}^{\min}(\Omega) = \lambda_{1,p}(0,1)$, that is, the problem trivializes in this case.

\item[$\bullet$] (non-convex membranes) Consider as example the annulus $\Omega_k = D_0 \setminus \bar{D}_k$, where $D_k$ is the disk concentric to $D_0$ with radius $r_k < 1/2$ where $r_k$ converges to $1/2$. Clearly, ${\rm diam}(\Omega_k) = 1$. By Theorem \ref{T2}, we have $\lambda_{1,p}^H(\Omega_k) = \lambda_{1,p}(0, \sqrt{1 - 4 r^2_k})$ for any $H \in {\cal H}^D$. Hence, by Theorem \ref{T4}, $\lambda_{1,p}^{\min}(\Omega_k) = \lambda_{1,p}(0, \sqrt{1 - 4r^2_k}) \rightarrow \infty$ as $k \rightarrow \infty$.
\end{itemize}

\n {\bf On Problem \eqref{ID4}:}

\begin{itemize}
\item[$\bullet$] (convex membranes) For $H(x,y) = \vert y \vert$ and $\Omega_k = (0,1) \times (0,1/k)$, we have ${\rm diam}(\Omega_k) = 1$ and, by Theorem \ref{T2}, $\lambda_{1,p}^{\max}(\Omega_k) \geq \lambda_{1,p}^H(\Omega_k) = \lambda_{1,p}(0,1/k) \rightarrow \infty$ as $k \rightarrow \infty$.

\item[$\bullet$] (non-convex membranes) Same example taken in \eqref{ID2}.
\end{itemize}

\n {\bf On Problem \eqref{IP1}:}

\begin{itemize}
\item[$\bullet$] (convex membranes) For $H(x,y) = \vert y \vert$ and $\Omega_k = (0, 1/k) \times (0,k)$, we have $\vert \Omega_k \vert = 1$ and, by Theorem \ref{T2}, $\lambda_{1,p}^{\min}(\Omega_k) \leq \lambda_{1,p}^H(\Omega_k) = \lambda_{1,p}(0,k) \rightarrow 0$ as $k \rightarrow \infty$.

\item[$\bullet$] (non-convex membranes) Let $H(x,y) = \vert y \vert$. Take a sector of annulus $\Omega_k$ with smaller radius equals to $1$, greater radius equals to $k$ and sectorial angle $\theta_k = 2 (k^2 - 1)^{-1}$ such that the axis $y$ is the bisector of $\theta_k$. From this choice of $\theta_k$, we have $\vert \Omega_k \vert = 1$ and, for $k$ large enough, one easily checks that $\lambda_{1,p}^{\min}(\Omega_k) \leq \lambda_{1,p}^H(\Omega_k) = \lambda_{1,p}(0,k - 1) \rightarrow 0$ as $k \rightarrow \infty$.
\end{itemize}

\n {\bf On Problem \eqref{IP2}:}

\begin{itemize}
\item[$\bullet$] (non-convex membranes) Let $\Omega_k$ be the membrane constructed as follows. We begin with two half annulus with smaller radius equals to $\frac{1}{6k\sqrt{\pi}}$ and greater radius equals to $\frac{1}{3k\sqrt{\pi}}$. We then glue them together in order to obtain an $S$-shaped region. We now proceed successively with these glues in order to form the membrane $\Omega_k$ in total with $12k^2$ of these $S$ type regions. It is clear that

    \[
    \vert \Omega_k \vert = \left(\frac{1}{9k^2 \pi} - \frac{1}{36k^2 \pi}\right) \pi \times 12k^2 = 1.
    \]
    On the other hand, by Theorem \ref{T4}, $\lambda_{1,p}^{\min}(\Omega_k) \geq \lambda_{1,p}(0,L_k)$, where $L_k = \frac{2}{3k\sqrt{\pi}}$ (twice the largest radius). Since $L_k \rightarrow 0$, we deduce that $\lambda_{1,p}^{\min}(\Omega_k) \rightarrow \infty$ as $k \rightarrow \infty$.
\end{itemize}

\n {\bf On Problem \eqref{IP4}:}

\begin{itemize}
\item[$\bullet$] (convex membranes) For $H(x,y) = \vert y \vert$ and $\Omega_k = (0,k) \times (0,1/k)$, we have $\vert \Omega_k \vert = 1$ and $\lambda_{1,p}^{\max}(\Omega_k) \geq \lambda_{1,p}^H(\Omega_k) = \lambda_{1,p}(0,1/k) \rightarrow \infty$ as $k \rightarrow \infty$.

\item[$\bullet$] (non-convex membranes) It follows from \eqref{IP2}.
\end{itemize}

\subsection{Main statements}\label{ssec: Main statements}

Our first theorem characterizes the shapes $\Omega$ for which $\lambda_{1,p}^H(\Omega)$ represents a fundamental frequency in the degenerate case. Here, thanks to the $C^1$ regularity of $H^p$ for degenerate anisotropies $H$ provided in Proposition \ref{P1}, the level $\lambda_{1,p}^H(\Omega)$ is a fundamental frequency if and only if it is attained in $W_0^{1,p}(\Omega)$.

For the statement, we make use of the notations $\Omega_{A} = A(\Omega)$ and $H_{A} = H \circ A$ for a rotation matrix $A$ and also $\lambda_{1,p}(0,L)$ stands for the fundamental frequency of the one-dimensional $p$-Laplace operator on the interval $(0,L)$.

\begin{teor}[shapes vs fundamental frequencies] \label{T1}
Let $\Omega$ be a $C^{0,1}$ membrane, let $p > 1$ and let $H \in {\cal H}^D\setminus\{0\}$. Let also $A$ be any rotation matrix such that $H_{A}(x,y) = c|y|$ for some constant $c > 0$. The assertions are equivalent:

\begin{itemize}
\item[(a)] $\lambda^H_{1,p}(\Omega)$ is a fundamental frequency (i.e. it admits a membrane eigenfunction in $W^{1,p}_0(\Omega)$);

\item[(b)] There are bounded open intervals $I^\prime \subset I \subset \mathbb{R}$, a $C^{0,1}$ sub-membrane $\Omega^\prime \subset \Omega$ and a number $L > 0$ such that:

\begin{itemize}

\item[(i)] $\Omega_A \subset I \times \mathbb{R}$,

\item[(ii)] Each connected component of $\{ y \in \mathbb{R}:\, (x,y) \in \Omega_A) \}$ has length at most $L$ for every $x \in I$,

\item[(iii)] The set $\{ y \in \mathbb{R}:\, (x,y) \in \Omega^\prime_A \}$ is an interval of length $L$ for every $x \in I^\prime$.
\end{itemize}
\end{itemize}
In that case, $L$ is the number so that $\lambda^H_{1,p}(\Omega) = c^p \lambda_{1,p}(0,L)$.
\end{teor}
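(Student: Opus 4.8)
The plan is to normalise the problem, reduce it to a one-variable slicing estimate, and then run the two implications separately. By Property \ref{PropertyP4} we may replace $(\Omega,H)$ by $(\Omega_A,H_A)$, minimizers corresponding through $u\mapsto u_A$, and writing $H_A=c\,G$ with $G(x,y)=|y|$ and invoking Property \ref{PropertyP1} we reduce further to the anisotropy $G$ on the membrane $\Omega_A$, at the cost of a factor $c^p$ in the eigenvalue; so it suffices to prove the theorem for $G(x,y)=|y|$ on $\Omega_A$ and then multiply the resulting eigenvalue identity by $c^p$. Set $\lambda_0:=\lambda^G_{1,p}(\Omega_A)$, positive by Proposition \ref{P.1} since $G\neq 0$; since $\ell\mapsto\lambda_{1,p}(0,\ell)=\ell^{-p}\lambda_{1,p}(0,1)$ is a strictly decreasing bijection of $(0,+\infty)$, there is a unique $L_0>0$ with $\lambda_0=\lambda_{1,p}(0,L_0)$, and it remains to show that (a) and (b) hold simultaneously and that, when they do, the constant $L$ in (b) equals $L_0$. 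The main tool is: for $v\in W^{1,p}_0(\Omega_A)$ and a.e. $x$, the slice $v(x,\cdot)$ lies in $W^{1,p}_0\big((\Omega_A)_x\big)$ with $(\Omega_A)_x:=\{y:(x,y)\in\Omega_A\}$ open, so the one-dimensional Poincaré inequality on each component, together with $\min_j\lambda_{1,p}(0,\ell_j)=\lambda_{1,p}\big(0,\ell_{\max}(x)\big)$ ($\ell_{\max}(x)$ the largest component length of $(\Omega_A)_x$), gives
\[
\int_{\R}|D_y v(x,y)|^p\,dy\;\ge\;\lambda_{1,p}\big(0,\ell_{\max}(x)\big)\int_{\R}|v(x,y)|^p\,dy
\]
for a.e. $x$ (both sides vanishing when $(\Omega_A)_x=\emptyset$). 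Integrating in $x$ and using $\|v\|_p=1$ yields $\lambda_0\ge\lambda_{1,p}\big(0,\sup_x\ell_{\max}(x)\big)$.

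Two elementary facts about the slices will be used repeatedly. Whenever a component of $(\Omega_A)_{x_0}$ has length $>\alpha$, covering a compact subinterval of it by open boxes contained in the open set $\Omega_A$ produces a genuine rectangle $J\times(\beta,\gamma)\subseteq\Omega_A$ with $x_0\in J$ and $\gamma-\beta$ arbitrarily close to that length; in particular $\ell_{\max}$ is lower semicontinuous, and $\lambda^G_{1,p}(\Omega_A)\le\lambda^G_{1,p}\big(J\times(\beta,\gamma)\big)=\lambda_{1,p}(0,\gamma-\beta)$ by Property \ref{PropertyP2}. Combining this with the inequality of the previous paragraph forces $\ell_{\max}(x)\le L_0$ for \emph{every} $x$ (were $\ell_{\max}(x_0)>L_0$, one would get $\lambda_0\le\lambda_{1,p}(0,\gamma-\beta)<\lambda_{1,p}(0,L_0)=\lambda_0$), and it shows that $L_0$ is exactly the essential supremum of $x\mapsto\ell_{\max}(x)$.

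For (b)$\Rightarrow$(a): properties (i)–(ii) give $\ell_{\max}(x)\le L$ for all $x\in I$ and $(\Omega_A)_x=\emptyset$ otherwise, so the slicing estimate yields $\lambda_0\ge\lambda_{1,p}(0,L)$. For the reverse, fix the $C^0$ sub-membrane $\Omega'$ and the interval $I'$ of (iii): for $x\in I'$ the slice $(\Omega'_A)_x$ is an interval of length $L$ inside $(\Omega_A)_x$, whose components have length $\le L$ by (ii), hence $(\Omega'_A)_x$ \emph{is} a component of $(\Omega_A)_x$; write it $(a(x),a(x)+L)$, with $a$ continuous on $I'$ by the $C^0$ regularity of $\Omega'$ (shrinking $I'$ if needed). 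With $\varphi\in C^\infty_c(I')\setminus\{0\}$ and $\psi\in W^{1,p}_0(0,L)$ the positive first eigenfunction, the function $u(x,y)=\varphi(x)\,\psi\big(y-a(x)\big)$ on $\{x\in I',\ a(x)<y<a(x)+L\}$, extended by $0$, lies in $W^{1,p}_0(\Omega_A)$ — this is exactly where the regularity of the straightening $(x,y)\mapsto(x,y-a(x))$ enters — and $D_y u=\varphi(x)\psi'(y-a(x))$ gives $\iint_{\Omega_A}|D_y u|^p\,dA=\lambda_{1,p}(0,L)\iint_{\Omega_A}|u|^p\,dA$. Thus $\lambda_0=\lambda_{1,p}(0,L)$, so $L=L_0$, $u/\|u\|_p$ is a minimizer, and transporting back by Properties \ref{PropertyP1} and \ref{PropertyP4} produces a membrane eigenfunction; hence (a) holds and $\lambda^H_{1,p}(\Omega)=c^p\lambda_{1,p}(0,L)$.

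For (a)$\Rightarrow$(b): let $u_p\in W^{1,p}_0(\Omega_A)$, $\|u_p\|_p=1$, realize $\lambda_0$, with $u_p\ge 0$. Take $I$ to be any bounded open interval containing the $x$-projection of $\Omega_A$, so (i) holds, and (ii) holds since $\ell_{\max}\le L_0$ everywhere. For (iii), set $w(x)=\int_{\R}|u_p(x,y)|^p\,dy$; the slicing estimate and $\int_{\R}w=1$ give
\[
\lambda_0=\iint_{\Omega_A}|D_y u_p|^p\,dA\ge\int_{\{w>0\}}\lambda_{1,p}\big(0,\ell_{\max}(x)\big)\,w(x)\,dx\ge\lambda_{1,p}(0,L_0)\int_{\{w>0\}}w(x)\,dx=\lambda_0,
\]
so equality holds throughout: for a.e.\ $x$ with $w(x)>0$ one has $\ell_{\max}(x)=L_0$ and $u_p(x,\cdot)$ is supported on components of $(\Omega_A)_x$ of length exactly $L_0$, being a one-dimensional first eigenfunction on each (equality case of the $1$D Poincaré inequality, using simplicity of the first eigenvalue). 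Since a nonzero $W^{1,p}$ function cannot be supported on a set thin in the $x$-direction — its a.e.\ horizontal restrictions are continuous — the set $\{w>0\}$ contains, up to a null set, an open interval $I_0$. The hard part, and the main obstacle of the proof, is to upgrade this a.e.\ slice description to a genuine $C^0$ sub-membrane: one must select, continuously in $x$ on some subinterval $I'\subseteq I_0$, the bottom endpoint $a(x)$ of one length-$L_0$ component of $(\Omega_A)_x$ on which $u_p(x,\cdot)>0$. This combines the openness of $\Omega_A$ (to propagate a fixed component to neighbouring slices, as above), the Sobolev regularity of $u_p$ (which on $I_0$ writes $u_p(x,\cdot)=c(x)\psi(\cdot-a(x))$ and thereby constrains $a$), and a Lusin-type decomposition of $I_0$ according to the number and ordering of the relevant components, followed by extraction of a subinterval on which the propagated component has length exactly $L_0$. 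Once $a$ is obtained, $\Omega':=A\big(\{(x,y):x\in I',\ a(x)<y<a(x)+L_0\}\big)$ is a $C^0$ sub-membrane of $\Omega$ satisfying (iii) with $L=L_0$, and $\lambda^H_{1,p}(\Omega)=c^p\lambda_0=c^p\lambda_{1,p}(0,L)$, which completes the plan.
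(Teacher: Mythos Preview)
Your reduction to $G(x,y)=|y|$, the slicing Poincar\'e estimate, and the implication (b)$\Rightarrow$(a) are all correct and essentially coincide with the paper's treatment. The interesting divergence is in (a)$\Rightarrow$(b): where the paper invokes its PDE dimension-reduction result (Proposition~\ref{propsol}) to show that each slice $u_p(x,\cdot)$ is a weak solution of the one-dimensional $p$-Laplace eigenvalue equation and then appeals to elliptic regularity/maximum principle for the ``zero or strictly positive on each component'' dichotomy, you instead extract this dichotomy directly from the equality case in your chain of variational inequalities. That is legitimate and arguably more elementary --- it bypasses Proposition~\ref{propsol} entirely.

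The weak point is your handling of what you yourself call the ``hard part'': building the $C^0$ sub-membrane $\Omega'$ and the continuous selection $a(\cdot)$. Your sketch (``Lusin-type decomposition \ldots followed by extraction of a subinterval'') is vague, and your intermediate formula $u_p(x,\cdot)=c(x)\psi(\cdot-a(x))$ presupposes a \emph{single} supporting component per slice, which is not justified at that stage. The paper resolves this more directly and the same idea would close your argument: by Morrey, for a.e.\ fixed $y_0$ the horizontal slice $u_p(\cdot,y_0)$ is continuous on $\overline{\Omega^{y_0}}$; pick $x_0$ with $w(x_0)>0$ and $y_0$ in a length-$L_0$ component $\tilde\Omega_{x_0}$ with $u_p(x_0,y_0)>0$ and with that horizontal continuity. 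Then $u_p(x,y_0)>0$ on a whole interval $I'$, so for (a.e.) $x\in I'$ the component $\tilde\Omega_x$ of $(\Omega_A)_x$ containing $y_0$ supports $u_p(x,\cdot)$ and hence has length exactly $L_0$ by your own equality-case analysis. The $C^0$ regularity of $\partial\Omega$ then makes the endpoints of $\tilde\Omega_x$ vary continuously in $x$, so $\Omega':=A\big(\bigcup_{x\in I'}\{x\}\times\tilde\Omega_x\big)$ is the desired $C^0$ sub-membrane. Replacing your Lusin step by this horizontal-Morrey propagation is all that is missing.
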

\n Note that there are precisely two rotation matrices $A$ that transform $H(x,y)$ into $c \vert y \vert$ and one is symmetric of the other, so the assertions $(i)-(iii)$ do not depend on the choice of $A$. Furthermore, proceeding with a rotation of $\pi/2$, one observes that each of them can be reformulated for the variable $x$, so that $(i)-(iii)$ also occur in the direction $x$.

The above characterization allows to determinate the exact number $m$ of anisotropies $H \in {\cal H}^D$ normalized by $\Vert H \Vert = 1$ such that $\lambda^H_{1,p}(\Omega)$ is a fundamental frequency depending on the shape $\Omega$.

Let us illustrate some shapes $\Omega$ and corresponding numbers $m$ in the following figures:

\begin{figure}[h]
    \centering
    \begin{minipage}{0.2\textwidth}
        \centering
        \includegraphics[width=0.5\textwidth]{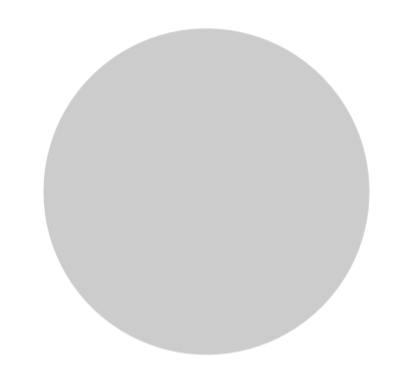} 
        \caption*{{\scriptsize Circular shape: $m=0$}}
    \end{minipage}\hspace{0.3cm}
    \begin{minipage}{0.25\textwidth}
    \vspace{0.3cm}
        \centering
        \includegraphics[width=0.4\textwidth]{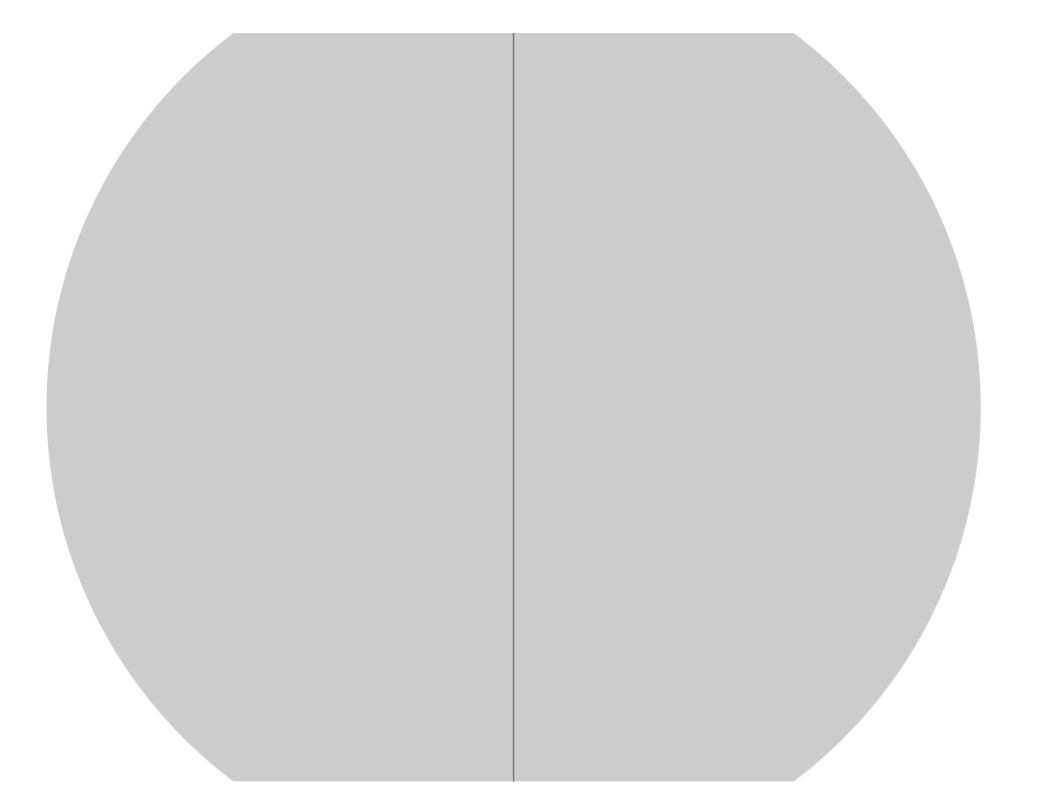} 
        \caption*{{\scriptsize Cropped circular shape: $m=1$}}
    \end{minipage}\hspace{0.3cm}	
    \begin{minipage}{0.19\textwidth}
        \centering
        \includegraphics[width=0.45\textwidth]{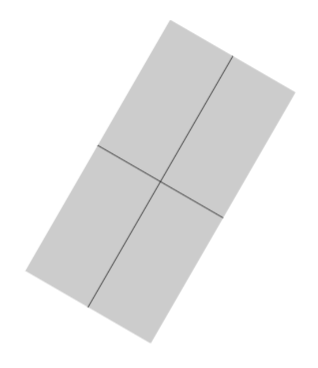} 
        \caption*{{\scriptsize Rectangular shape: $m=2$}}
    \end{minipage}
    \begin{minipage}{0.21\textwidth}
        \centering
        \includegraphics[width=0.5\textwidth]{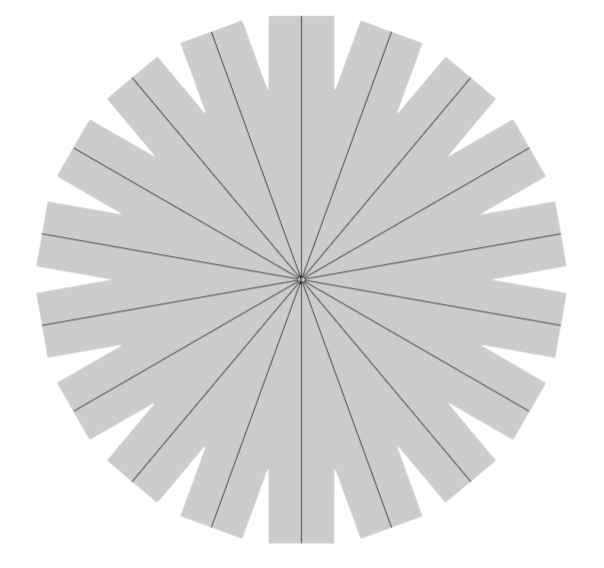} 
        \caption*{{\scriptsize Asterisk shape: $m=9$}}
    \end{minipage}
\end{figure}
\n It deserves to be noted that for $m \geq 3$ the asterisk shape is constructed from the intersection of $m$ appropriately positioned rectangles.


Given a membrane $\Omega$, consider the directional width function $L: [0,\pi] \rightarrow \R$ defined by $L(\theta) = L_\theta$, where

\[
L_\theta:= \sup_{v \in \R^{2}} \sup\left\{|J| \colon J \subset \left\{t(-\cos{\theta},\sin{\theta}) + v \colon t \in \R\right\}\cap\Omega, \ J \ \text{is connected}\right\}.
\]
It is clear that $L$ is well defined and extends $\pi$-periodically to $\R$ since $\Omega$ is a bounded domain.

Regardless of whether $\lambda^H_{1,p}(\Omega)$ is a fundamental frequency or not when $H \in {\cal H}^D\setminus\{0\}$, we determine its value in any situation.

\begin{teor}[anisotropic least level] \label{T2}
Let $\Omega$ be any membrane and let $p > 1$. For any $H \in {\cal H}^D\setminus\{0\}$, we have

\[
\lambda^H_{1,p}(\Omega) = c^p \lambda_{1,p}(0,L_{\theta}),
\]
where $c > 0$ and $\theta \in [0,\pi]$ are such that $H(x,y) = c\, \vert\cos{\theta}\, x + \sin{\theta}\, y \,\vert$.
\end{teor}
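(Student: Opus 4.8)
By Property \ref{PropertyP1} it suffices to treat the case $c=1$, since $\lambda_{1,p}^{cH_0}(\Omega)=c^p\lambda_{1,p}^{H_0}(\Omega)$. By Proposition \ref{P1}, any $H\in{\cal H}^D\setminus\{0\}$ has the form $H(x,y)=c|\cos\theta\,x+\sin\theta\,y|$, and choosing the rotation matrix $A$ of angle $\theta-\pi/2$ gives $H_A(x,y)=c|y|$; Property \ref{PropertyP4} then reduces the whole computation to showing $\lambda_{1,p}^{H}(\Omega)=\lambda_{1,p}(0,L)$ for $H(x,y)=|y|$, where $L=L_\theta$ is the supremal length of a connected vertical slice of $\Omega$ (noting that a rotation by $\theta-\pi/2$ sends the $\theta$-direction to the vertical, so the directional width in the rotated domain is exactly $L_\theta$). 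Thus the theorem will follow once I establish: \emph{for any membrane $\Omega\subset\R^2$, writing $L=\sup\{|J|: J\text{ a connected vertical slice of }\Omega\}$, one has $\lambda_{1,p}^{|y|}(\Omega)=\lambda_{1,p}(0,L)$, with the convention that this is $0$ when $L=\infty$.}

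**The upper bound $\lambda_{1,p}^{|y|}(\Omega)\le\lambda_{1,p}(0,L)$.** The plan is to use a test function supported near a near-extremal slice. Fix $\varepsilon>0$ and pick a connected vertical segment $J\subset\Omega$ with $|J|>L-\varepsilon$, say $J\subset\{x_0\}\times(a,a+\ell)$ with $\ell>L-\varepsilon$. Since $\Omega$ is open, a thin rectangle $R=(x_0-\delta,x_0+\delta)\times(a,a+\ell)$ lies in $\Omega$ for small $\delta$. On $R$, using the one-dimensional minimizer $\phi$ of $\lambda_{1,p}(a,a+\ell)$ in the $y$-variable only (constant in $x$, cut off in $x$ by a narrow bump and then noting the bump contributes nothing to $\iint|y$-derivative$|^p$), one builds $u(x,y)=\chi(x)\phi(y)$ with $\iint_R |D_y u|^p\,dA\big/\iint_R|u|^p\,dA\to\lambda_{1,p}(a,a+\ell)=\ell^{-p}\lambda_{1,p}(0,1)$; a slight care is needed because the $x$-cutoff makes $u$ not exactly separated, but since $H$ only sees $D_y u=\chi(x)\phi'(y)$ the ratio is \emph{exactly} $\lambda_{1,p}(a,a+\ell)$ for the genuinely separated $u=\chi(x)\phi(y)$ with $\chi$ smooth compactly supported, so no limit is even needed. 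Letting $\ell\uparrow L$ (i.e. $\varepsilon\to0$) gives $\lambda_{1,p}^{|y|}(\Omega)\le\lambda_{1,p}(0,L)$. When $L=\infty$, the same construction with $\ell\to\infty$ forces the level to $0$.

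**The lower bound $\lambda_{1,p}^{|y|}(\Omega)\ge\lambda_{1,p}(0,L)$.** This is the heart of the matter. For $u\in C_0^\infty(\Omega)$ and a.e.\ fixed $x$, the function $y\mapsto u(x,y)$ is smooth and compactly supported in the open set $\Omega^x:=\{y:(x,y)\in\Omega\}$, which is a disjoint union of open intervals each of length $\le L$. On each such component interval $I$ of length $|I|\le L$, the one-dimensional Poincaré inequality gives $\int_I|D_y u(x,y)|^p\,dy\ge\lambda_{1,p}(0,|I|)\int_I|u(x,y)|^p\,dy\ge\lambda_{1,p}(0,L)\int_I|u(x,y)|^p\,dy$, using the monotonicity $\lambda_{1,p}(0,\cdot)$ decreasing. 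Summing over components and integrating in $x$ yields $\iint_\Omega|D_y u|^p\,dA\ge\lambda_{1,p}(0,L)\iint_\Omega|u|^p\,dA$, hence by density $\lambda_{1,p}^{|y|}(\Omega)\ge\lambda_{1,p}(0,L)$. The mild subtlety is that the support of $u(x,\cdot)$ may meet several components and that $\Omega^x$ need not be an interval, but since the Poincaré inequality is applied componentwise this causes no trouble; one should also check the slices are genuinely open intervals, which holds because $\Omega$ is open. Combining the two bounds gives equality.

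**Expected main obstacle.** The only real delicacy is the upper bound when $L$ is finite but the supremum defining $L_\theta$ is not attained by any single segment lying in a rectangle inside $\Omega$ (e.g.\ a triangle, where vertical slices approach length $L$ only near a vertex): one must take $\ell<L$ strictly and thin rectangles, then pass to the limit $\ell\uparrow L$ using continuity of $\ell\mapsto\lambda_{1,p}(0,\ell)=\ell^{-p}\lambda_{1,p}(0,1)$. This is routine but is the one place where openness of $\Omega$ and an approximation argument are genuinely used; everything else is a direct application of the one-dimensional Poincaré inequality together with Properties \ref{PropertyP1} and \ref{PropertyP4} and Proposition \ref{P1}.
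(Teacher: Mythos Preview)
Your proof is correct, but your route for the lower bound differs from the paper's in a way worth noting. The reduction to $H(x,y)=|y|$ via Proposition~\ref{P1} and Properties~\ref{PropertyP1}, \ref{PropertyP4} is identical, and the upper bound via test functions $u(x,y)=\chi(x)\phi(y)$ supported on thin rectangles is essentially the same as the paper's (the paper phrases it as Theorem~\ref{T1} applied to rectangles, which unwinds to precisely your computation). For the lower bound, however, the paper does not argue directly: it first assumes $\partial\Omega$ is $C^0$, constructs via a gluing lemma (Proposition~\ref{propomegaeps}) a slightly larger $C^0$ membrane $\Omega^\varepsilon\supset\Omega$ satisfying the structural hypotheses of Theorem~\ref{T1}, applies monotonicity \ref{PropertyP2} and Theorem~\ref{T1} to get $\lambda_{1,p}^H(\Omega)\ge\lambda_{1,p}(0,L_\Omega+\varepsilon)$, and then handles arbitrary $\Omega$ by sandwiching between inside and outside $C^0$ approximations. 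Your slicewise Poincar\'e argument on each component of $\Omega^x$ for $u\in C_0^\infty(\Omega)$ is more elementary: it bypasses the gluing construction, the $C^0$ hypothesis, and the inside/outside approximation entirely, and works verbatim for any bounded open $\Omega$. What the paper's route buys is a tight structural link back to Theorem~\ref{T1}; what yours buys is a shorter, self-contained argument. One minor imprecision: the claim that a thin rectangle $(x_0-\delta,x_0+\delta)\times(a,a+\ell)\subset\Omega$ exists is not literally true when the endpoints $(x_0,a)$, $(x_0,a+\ell)$ lie on $\partial\Omega$ (think of a diameter of a disk), but you correctly identify this in your ``main obstacle'' paragraph, and shrinking to a compact subsegment before thickening fixes it.
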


This theorem is a powerful tool that allows us to solve a long-standing conjecture. It has long been known that fundamental frequencies $\lambda_1(\Omega)$ of membranes $\Omega$ satisfy

\[
\sup \{\lambda_1(\Omega):\, \forall\, \text{membrane\ } \Omega \subset \R^2\ {\rm with}\ \vert \Omega \vert = 1\} = \infty.
\]
Its proof is direct and uses merely the explicit knowing of $\lambda_1(\Omega)$, given precisely by

\[
\pi^2 \left(\frac{1}{a^2} +  \frac{1}{b^2}\right),
\]
for rectangles under the form $\Omega = (0,a) \times (0,b)$, which was computed by Rayleigh in the century XIX.

This optimization problem gave rise to the following well-known conjecture for the $p$-Laplace operator among experts in the field:\\

\n {\bf Maximization Conjecture:} For any $p \neq 2$, it holds that

\[
\sup \{\lambda_{1,p}(\Omega):\, \forall\, \text{membrane\ } \Omega \subset \R^2\ {\rm with}\ \vert \Omega \vert = 1\} = \infty.
\]

\vs{0.5cm}

\n Unfortunately, the above argument fails for values $p \neq 2$, since fundamental frequencies $\lambda_{1,p}(\Omega)$ are generally unknown for most membranes.

It is clear that the veracity of this conjecture implies that it also occurs for $\lambda^H_{1,p}(\Omega)$ where $H$ is any anisotropy in ${\cal H}^P$. However, our strategy of proof is based on the knowing of fundamental frequencies for degenerate anisotropies expressed in Theorem \ref{T2}. In particular, our approach allows to prove the conjecture in a more general setting, including even degenerate elliptic operators.

Precisely, we have:

\begin{teor}[isoperimetric maximization] \label{T2.1}
Let $p > 1$. For any $H \in {\cal H}\setminus\{0\}$, we have

\[
\sup \{\lambda^H_{1,p}(\Omega):\, \forall\, {\rm membrane\ } \Omega \subset \R^2\ {\rm with}\ \vert \Omega \vert = 1\} = \infty.
\]
\end{teor}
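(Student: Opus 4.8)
The plan is to reduce the claim for an arbitrary nonzero anisotropy to the single ``rather degenerate'' case $H(x,y)=|y|$, for which Theorem \ref{T2} turns the supremum into a one-dimensional eigenvalue computation along a sequence of very thin rectangles. First I would show that every $H\in{\cal H}\setminus\{0\}$ dominates a nonzero \emph{degenerate} anisotropy: there is $G\in{\cal H}^D\setminus\{0\}$ with $G\le H$ pointwise. Since $H$ is finite, convex and $1$-homogeneous, the sublevel set $B_H=\{(x,y):H(x,y)\le 1\}$ is a closed convex symmetric set with the origin in its interior and $B_H\neq\R^2$ (because $H(\xi_0)>0$ for some $\xi_0\neq 0$). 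At any boundary point there is a supporting line, hence a nonzero linear functional $\ell$ with $\sup_{B_H}\ell=1$, and by symmetry $\sup_{B_H}|\ell|=1$. For $\xi$ with $H(\xi)>0$ the vector $\xi/H(\xi)$ lies in $B_H$, so $|\ell(\xi)|\le H(\xi)$; and if $H(\xi)=0$ then $t\xi\in B_H$ for all $t\in\R$, forcing $\ell(\xi)=0$. Thus $G:=|\ell|\in{\cal H}^D\setminus\{0\}$ and $G\le H$, and by Proposition \ref{P1} one may write $G(x,y)=c\,|\cos\theta\,x+\sin\theta\,y|$ with $c>0$ and $\theta\in[0,\pi]$.

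Next I would reduce to $G(x,y)=|y|$. By Property \ref{PropertyP3}, $\lambda^H_{1,p}(\Omega)\ge\lambda^G_{1,p}(\Omega)$ for every membrane $\Omega$, so it suffices to prove that the supremum of $\lambda^G_{1,p}(\Omega)$ over unit-area membranes is infinite. Rotations are area-preserving bijections of the class of membranes and leave the least level invariant by Property \ref{PropertyP4}, and, together with the scaling $\lambda^{\alpha G}_{1,p}=\alpha^p\lambda^G_{1,p}$ of Property \ref{PropertyP1}, this lets me assume $G(x,y)=|y|$; equivalently, one can keep $G$ as it is and apply Theorem \ref{T2} directly to rectangles whose sides are aligned with $(\cos\theta,\sin\theta)$ and $(-\sin\theta,\cos\theta)$.

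Finally, for $k\in\N$ I would take $\Omega_k=(0,k)\times(0,1/k)$, so that $|\Omega_k|=1$. With $G(x,y)=|y|$ we have $\theta=\pi/2$ and $c=1$, and every vertical chord of $\Omega_k$ is an interval of length $1/k$, so the directional width is $L_{\pi/2}(\Omega_k)=1/k$. Theorem \ref{T2} then gives $\lambda^G_{1,p}(\Omega_k)=\lambda_{1,p}(0,1/k)$, and the scaling $\lambda_{1,p}(0,L)=L^{-p}\lambda_{1,p}(0,1)$ yields $\lambda^G_{1,p}(\Omega_k)=k^p\,\lambda_{1,p}(0,1)\to\infty$. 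Hence $\sup\{\lambda^H_{1,p}(\Omega):\,|\Omega|=1\}\ge\sup_k\lambda^G_{1,p}(\Omega_k)=\infty$, which is the assertion.

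The only step that is not purely mechanical is the first one, namely producing a nonzero degenerate anisotropy dominated by $H$; the hard part there is simply recognizing that a supporting-hyperplane argument on $B_H$ does the job (and handling the possibility that $B_H$ is unbounded when $H$ itself is degenerate). Once Theorem \ref{T2} is available, the remainder is routine, and the sequence $\Omega_k$ is essentially the one already used for Problem \eqref{IP4} in the preceding subsection.
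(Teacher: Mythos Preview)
Your proof is correct and follows essentially the same route as the paper's: dominate $H$ from below by a nonzero degenerate anisotropy, rotate to reduce to $|y|$, and evaluate $\lambda^G_{1,p}$ on the thin rectangles $(0,k)\times(0,1/k)$ via Theorem~\ref{T2}. The only cosmetic difference is that you justify the domination step with a supporting-hyperplane argument on $B_H$ (this is exactly Proposition~\ref{propbound} in the paper), whereas the paper's proof of Theorem~\ref{T2.1} invokes the more elementary case split from the proof of Proposition~\ref{P.1}.
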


The next result shows that the sharp upper anisotropic inequality \eqref{UII} is rigid on smooth membranes and so the isoanisotropic maximization problem is completely solved under these conditions.

\begin{teor}[upper anisotropic rigidity] \label{T3}
Let $\Omega$ be any membrane and let $p > 1$. The optimal anisotropic constant $\lambda_{1,p}^{\max}(\Omega)$ is given by $\lambda_{1,p}(\Omega)$ and the Euclidean norm is a corresponding anisotropic extremizer. Moreover, it is unique provided that $\partial \Omega$ is $C^{1,\alpha}$.
\end{teor}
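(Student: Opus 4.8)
The plan is to separate the regularity-free computation of $\lambda_{1,p}^{\max}(\Omega)$ from the rigidity assertion, which is the only place where $C^{1,\alpha}$ is actually needed. For the first part: every $H\in\s({\cal H})$ satisfies $\|H\|=1$, and by $1$-homogeneity this is equivalent to the pointwise bound $H(x,y)\le|(x,y)|$ on $\R^2$; hence Property \ref{PropertyP3} gives $\lambda_{1,p}^{H}(\Omega)\le\lambda_{1,p}^{|\cdot|}(\Omega)=\lambda_{1,p}(\Omega)$, so $\lambda_{1,p}^{\max}(\Omega)\le\lambda_{1,p}(\Omega)$, and equality holds because the Euclidean norm itself lies in $\s({\cal H})$. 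In particular $\lambda_{1,p}^{\max}(\Omega)$ is finite and the Euclidean norm is an extremizer, with no regularity needed.

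For the rigidity, suppose $\partial\Omega$ is $C^{1,\alpha}$ and let $H\in\s({\cal H})$ be an arbitrary extremizer. First I would pick a positive $L^p$-normalized first eigenfunction $u_p\in W^{1,p}_0(\Omega)$ of the usual $p$-Laplace operator, which exists since $|\cdot|\in{\cal H}^P$ is $C^1$ off the origin. Using $u_p$ as a competitor in the variational problem \eqref{frequency} defining $\lambda_{1,p}^{H}(\Omega)$, together with the pointwise bound $H\le|\cdot|$,
\[
\lambda_{1,p}(\Omega)=\lambda_{1,p}^{H}(\Omega)\le\iint_{\Omega}H^{p}(\nabla u_p)\,dA\le\iint_{\Omega}|\nabla u_p|^{p}\,dA=\lambda_{1,p}(\Omega),
\]
so equality propagates through the whole chain. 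The nonnegative integrand $|\nabla u_p|^{p}-H^{p}(\nabla u_p)$ then has vanishing integral, hence is $0$ almost everywhere; and since $u_p\in C^{1}(\overline{\Omega})$ by Lieberman's global regularity on $C^{1,\alpha}$ domains and $H$ is continuous, this nonnegative continuous function actually vanishes everywhere, i.e. $H(\nabla u_p(x))=|\nabla u_p(x)|$ for all $x\in\overline{\Omega}$.

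It then remains to show that the unit vectors $\nabla u_p(x_0)/|\nabla u_p(x_0)|$ with $x_0\in\partial\Omega$ exhaust $\s^1$. By the strong maximum principle $u_p>0$ in $\Omega$, and by Hopf's boundary point lemma for $-\Delta_p u_p=\lambda_{1,p}(\Omega)\,u_p^{p-1}\ge 0$ on a $C^{1,\alpha}$ domain, $\partial_{\nu}u_p(x_0)<0$ at every $x_0\in\partial\Omega$; since $u_p\in C^1(\overline{\Omega})$ vanishes on $\partial\Omega$, its gradient at $x_0$ is parallel to the outward normal $\nu(x_0)$, so $\nabla u_p(x_0)=-c(x_0)\,\nu(x_0)$ with $c(x_0)>0$, whence $H(\nu(x_0))=1$ by $1$-homogeneity of $H$ (so $H(-\xi)=H(\xi)$) and the identity of the previous paragraph. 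Finally the Gauss map $x_0\mapsto\nu(x_0)$ is onto $\s^1$: given $e\in\s^1$, the functional $(x,y)\mapsto\langle(x,y),e\rangle$ attains its maximum over the compact set $\overline{\Omega}$ at a point of $\partial\Omega$ whose outward normal is necessarily $e$. Therefore $H\equiv1$ on $\s^1$, i.e. $H=|\cdot|$, and the Euclidean norm is the unique extremizer.

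The hard part is concentrated entirely in the last two steps: one must make sure that, under $\partial\Omega\in C^{1,\alpha}$, the first $p$-eigenfunction is genuinely $C^1$ up to the boundary and that a Hopf-type boundary estimate for the $p$-Laplacian is available (which is unproblematic here because the right-hand side $\lambda_{1,p}(\Omega)\,u_p^{p-1}$ is nonnegative and bounded). These are precisely the ingredients that turn the a.e. equality into an everywhere equality and force $\nabla u_p\ne 0$ on all of $\partial\Omega$, and they are the sole reason the regularity hypothesis enters the statement, even though the value $\lambda_{1,p}^{\max}(\Omega)=\lambda_{1,p}(\Omega)$ is insensitive to it.
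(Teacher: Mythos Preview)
Your proof is correct and follows essentially the same approach as the paper. Both arguments compute $\lambda_{1,p}^{\max}(\Omega)=\lambda_{1,p}(\Omega)$ via the pointwise bound $H\le|\cdot|$, and for rigidity both rely on the same three ingredients: $C^1(\overline{\Omega})$ regularity of the first $p$-eigenfunction $u_p$, the Hopf lemma giving $\nabla u_p\neq 0$ on $\partial\Omega$ and parallel to the outward normal, and the surjectivity of the Gauss map (your ``maximize $\langle\cdot,e\rangle$ over $\overline{\Omega}$'' is exactly the paper's sliding half-plane argument). The only difference is framing: the paper argues by contraposition---assuming $H\neq|\cdot|$, it picks one direction $\theta$ with $H(\cos\theta,\sin\theta)<1$, locates a boundary point with that normal, and produces the strict inequality $\lambda_{1,p}^{H}(\Omega)<\lambda_{1,p}(\Omega)$ via a local estimate---whereas you argue directly, forcing $H(\nabla u_p)=|\nabla u_p|$ everywhere and then sweeping over all of $\s^1$. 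The logical content is identical.
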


We now focus on the complete statement of the isoanisotropic minimization problem. Before, however, we shall introduce a new definition linking the membrane to a condition of length optimality as follows:

\begin{defi}
A membrane $\Omega$ is said to have an optimal anisotropic design if the width function $L: [0,\pi] \rightarrow \R$ has a global maximum point.
\end{defi}
For a broad set of membranes $\Omega$, which includes all convex bodies and annulus as well as many non-convex membranes, the definition is clearly satisfied. More specifically, if for some couple of points $A, B \in \partial \Omega$ such that

\[
\bar{L} := \sup_{\theta \in [0, \pi]} L_\theta = \vert A - B \vert,
\]
the following one-sided property occurs:\\

\n \textit{There exists a number $\varepsilon > 0$ such that, at least in one of the half-planes determinate by the support line $r$ of $A$ and $B$, the intersection of $\Omega$ with all lines $s$ parallel to $r$ such that ${\rm dist}(s,r) < \varepsilon$ are connected sets},\\

\n then $\Omega$ is a membrane with optimal anisotropic design.

On the other hand, the figure below shows a punctual counterexample of a membrane where the optimal design condition fails.

\begin{figure}[h]
\centering
\includegraphics[width=0.3\linewidth]{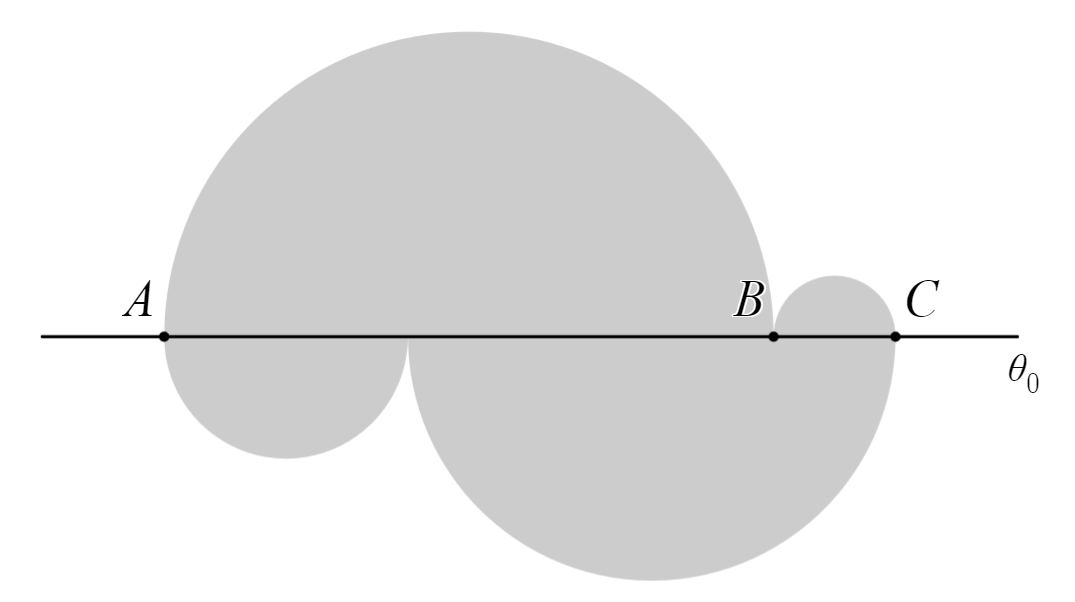}
\caption*{}
\end{figure}

\vspace{-1cm}

\n For this shape we have $\bar{L} = \vert A - C \vert$, while $L_{\pi-\theta_0} = \vert A - B \vert$, so the function $L$ doesn't admit any global maximum point.

The notion of optimal design is driven by the following statement:

\begin{teor}[lower anisotropic classification] \label{T4}
Let $\Omega$ be any membrane and let $p > 1$. The optimal anisotropic constant $\lambda_{1,p}^{\min}(\Omega)$ is always positive and given by

\[
\lambda_{1,p}^{\min}(\Omega) = \inf_{\theta \in [0,\pi]} \lambda_{1,p}(0, L_\theta).
\]
Moreover, the infimum is attained if and only if $\Omega$ has optimal anisotropic design. In this case, if $\theta_0$ is a global maximum point of the function $L$, then

\[
H(x,y) = |x\cos{\theta_0} + y\sin{\theta_0}|
\]
is an anisotropic extremizer corresponding to $\lambda_{1,p}^{\min}(\Omega)$. Furthermore, all extremizers have this form provided that $\partial \Omega$ is $C^{0,1}$.
\end{teor}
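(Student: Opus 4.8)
The plan is to reduce everything to the degenerate-anisotropy formula of Theorem \ref{T2}. First I would observe that $\s({\cal H}) = \s({\cal H}^P) \cup \s({\cal H}^D)$, and that by Property \ref{PropertyP3} any positive anisotropy $H \in \s({\cal H}^P)$ dominates a nonzero degenerate one. Indeed, for each $\theta$ the linear form $\ell_\theta(x,y) = x\cos\theta + y\sin\theta$ satisfies $|\ell_\theta| \leq \|\ell_\theta\|^{-1}\,\|H\|\, H \cdot (\text{something})$—more carefully, one picks the direction realizing $\|H\| = \max_{|v|=1} H(v)$ and uses that $H \geq c|\ell_\theta|$ for a suitable $c$; but the cleanest route is: for $H \in \s({\cal H})$ arbitrary and any $\theta$, the seminorm $H$ restricted to the line spanned by $(\cos\theta,\sin\theta)$ gives $H(\cos\theta,\sin\theta) =: a_\theta \le \|H\| = 1$, hence $G_\theta(x,y) := a_\theta|\langle(x,y),(\cos\theta,\sin\theta)\rangle| \cdot(\text{NO})$. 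Rather, the right comparison is with the \emph{dual}: since $D_H \supseteq D_{\|\cdot\|_{\mathrm{eucl}}}$ (as $\|H\|$ is the inradius of $D_H$, normalized to $1$), one gets $H(x,y) \geq |\ell_\theta(x,y)|$ for the $\theta$ pointing to the incircle contact point, but not for all $\theta$. So instead I would argue directly: $\lambda_{1,p}^{\min}(\Omega) = \inf_{H \in \s({\cal H})}\lambda_{1,p}^H(\Omega)$, and since $\s({\cal H}^D)\setminus\{0\}$-elements are exactly the normalized $|\ell_\theta|$ (Proposition \ref{P1} forces $c=1$ under $\|H\|=1$), Theorem \ref{T2} gives $\lambda_{1,p}^{|\ell_\theta|}(\Omega) = \lambda_{1,p}(0,L_\theta)$. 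Hence $\inf_{H \in \s({\cal H}^D)}\lambda_{1,p}^H(\Omega) = \inf_{\theta}\lambda_{1,p}(0,L_\theta)$, which is positive because $L_\theta \leq \mathrm{diam}(\Omega) < \infty$ and $\lambda_{1,p}(0,L)$ is decreasing in $L$.

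Next I would show the positive anisotropies cannot lower the infimum. For $H \in \s({\cal H}^P)$, using that the incircle of $D_H$ has radius $1$ one obtains a contact direction; but to handle \emph{all} of $\s({\cal H})$ at once it is enough to note $\s({\cal H})$ is the closed convex hull of $\s({\cal H}^D)$ in the Banach space $X$ (every seminorm with unit inradius is a supremum — hence a limit of maxima — of normalized linear forms it dominates), and $H \mapsto \lambda_{1,p}^H(\Omega)$ is monotone and continuous enough that $\inf_{\s({\cal H})} = \inf_{\s({\cal H}^D)}$. Concretely: given $H \in \s({\cal H})$, there is $\theta$ with $H \geq |\ell_\theta|$ pointwise (take $\theta$ so that $(\cos\theta,\sin\theta)$ is an inward normal to $\partial D_H$ at an incircle contact point; then the supporting line of $D_H$ there is $\{\ell_\theta = 1\}$ and convexity plus inradius $=1$ gives $D_H \subseteq \{|\ell_\theta|\le 1\}$, i.e.\ $H \ge |\ell_\theta|$). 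By Property \ref{PropertyP3}, $\lambda_{1,p}^H(\Omega) \geq \lambda_{1,p}^{|\ell_\theta|}(\Omega) = \lambda_{1,p}(0,L_\theta) \geq \inf_\theta \lambda_{1,p}(0,L_\theta)$. Combining the two directions yields the formula $\lambda_{1,p}^{\min}(\Omega) = \inf_\theta \lambda_{1,p}(0,L_\theta)$.

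For the attainment dichotomy: since $\lambda_{1,p}(0,L) = L^{-p}\lambda_{1,p}(0,1)$ is continuous and strictly decreasing, $\inf_\theta \lambda_{1,p}(0,L_\theta)$ is attained by \emph{some} $H\in\s({\cal H})$ iff it is attained along $\s({\cal H}^D)$ (the estimate $\lambda_{1,p}^H(\Omega) \ge \lambda_{1,p}(0,L_\theta)$ above is strict unless $H = |\ell_\theta|$, by the equality case that I would extract from Property \ref{PropertyP3} together with the one-dimensional rigidity of $\lambda_{1,p}(0,L)$) iff $\sup_\theta L_\theta$ is attained, i.e.\ iff $L$ has a global maximum point — which is exactly the optimal anisotropic design condition. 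When $\theta_0$ is such a maximizer, $H = |\ell_{\theta_0}|$ realizes the infimum by the displayed computation. The remaining assertion — that under $\partial\Omega \in C^{0,1}$ \emph{every} extremizer has this form — requires showing no positive anisotropy can be extremal: if $H \in \s({\cal H}^P)$ achieved $\lambda_{1,p}^{\min}(\Omega)$, then $\lambda_{1,p}^H(\Omega) = \lambda_{1,p}(0,L_{\theta_0})$ with $H \geq |\ell_{\theta_0}|$ and $H \not\equiv |\ell_{\theta_0}|$, and one must derive a contradiction; this is where I expect the main obstacle. The natural tool is a strict-monotonicity refinement of Property \ref{PropertyP3}: if $G \le H$, $G \not\equiv H$, and the minimizer for $G$ has gradient ranging over a set where $G < H$ on a positive-measure set, then $\lambda_{1,p}^G(\Omega) < \lambda_{1,p}^H(\Omega)$. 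For $G = |\ell_{\theta_0}|$ the minimizer is essentially one-dimensional in the $\theta_0$-direction (by Theorem \ref{T1}, valid under $C^{0,1}$ boundary, on the sub-membrane where the width $L_{\theta_0}$ is attained), so its gradient points in the $\theta_0$-direction a.e., where $H > |\ell_{\theta_0}|$ unless the unit ball of $H$ has a flat face there — but a flat face in a \emph{positive} anisotropy still allows $H(\cos\theta_0,\sin\theta_0) \cdot$ could equal $1$; the genuinely delicate point is ruling out that $H$ agrees with $|\ell_{\theta_0}|$ exactly on the relevant cone of gradient directions while staying positive elsewhere, and here I would invoke that such an $H$ would have to be degenerate (a positive convex $1$-homogeneous function equal to $|\ell_{\theta_0}|$ on a two-sided cone about the $\theta_0$-axis forces $H \equiv |\ell_{\theta_0}|$ by convexity), giving the contradiction and hence the $C^{0,1}$ rigidity statement.
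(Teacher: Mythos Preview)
Your reduction to degenerate anisotropies via the incircle contact point is correct and is precisely the content of the paper's Proposition \ref{propbound}; together with Theorem \ref{T2} this gives the formula $\lambda_{1,p}^{\min}(\Omega) = \inf_\theta \lambda_{1,p}(0,L_\theta)$ and the attainment dichotomy just as in the paper's Proposition \ref{T3degenerate}. (Your parenthetical about strict inequality is not needed for the attainment direction: if any $H \in \s({\cal H})$ attains the infimum, the degenerate $H_0 \leq H$ furnished by Proposition \ref{propbound} attains it too, by Property \ref{PropertyP3}.)

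The genuine gap is in your $C^{0,1}$ rigidity argument, where two claims fail. First, the assertion that a minimizer for $G = |\ell_{\theta_0}|$ has gradient pointing in the $\theta_0$-direction a.e.\ is false. After rotating so that $G(x,y) = |y|$, Remark \ref{eigencharrem} gives the minimizer the form $u_p(x,y) = \varphi(x)\psi_p(y - g(x))$ on a sub-membrane, with $\varphi \in W^{1,p}_0(I_0)$ nonconstant (it vanishes at the endpoints). Hence $D_x u_p = \varphi'(x)\psi_p(y - g(x)) - \varphi(x)\psi_p'(y - g(x)) g'(x)$ is nonzero on a set of positive measure; the gradient is genuinely two-dimensional. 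Second, your convexity claim is false: a positive anisotropy \emph{can} coincide with $|\ell_{\theta_0}|$ on a full two-sided cone about the $\theta_0$-axis. For instance $H(x,y) = \max(|y|, |x|/M)$ with $M > 1$ is a norm in $\s({\cal H}^P)$ that equals $|y|$ on the cone $\{|x| \leq M|y|\}$.

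The paper's Proposition \ref{T3degenerate1} goes in the opposite direction: rather than looking where $H$ and $H_0$ agree, it exploits where they must differ. Since $H \in \s({\cal H}^P)$ satisfies $H(x,0) > 0 = 2H_0(x,0)$ for $x \neq 0$, the open set $U = \{H > 2H_0\}$ contains a neighborhood of the punctured $x$-axis. The shared minimizer $u_p$ has the product form above, and along the curve $\{y = g(x) + L/2\}$ one has $D_y u_p = 0$; the delicate step (this is where the Lipschitz regularity of $g$, hence of $\partial\Omega$, enters) is to show that $D_x u_p$ cannot simultaneously be small in a full neighborhood of this curve, for otherwise integrating $\nabla u_p$ along the Lipschitz curves $t \mapsto (t, g(t) + c)$ would force $u_p$ to vanish there. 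Thus $\nabla u_p \in U$ on a set $W$ of positive measure on which $H_0(\nabla u_p) \neq 0$ a.e., and the strict inequality $\int_\Omega H^p(\nabla u_p) \geq \int_\Omega H_0^p(\nabla u_p) + (2^p - 1)\int_W H_0^p(\nabla u_p) > \lambda_{1,p}^{H_0}(\Omega)$ gives the contradiction.
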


\n According to previous discussion, the optimal lower constant $\lambda_{1,p}^{\min}(\Omega)$ is never attained on $\s({\cal H})$ for the above $S$-shaped membrane. On the other hand, the set of anisotropic extremizers on $\s({\cal H})$ is a nonempty subset of ${\cal H}^D$ for any $C^{0,1}$ membrane with optimal anisotropic design and in addition their cardinality is equal to the number of points of global maximum of the function $L$. In particular, the classification of optimal $2D$ anisotropies given in Theorem \ref{T4} provides a close relationship between shapes $\Omega$ and multiplicity of anisotropic extremizers for $\lambda_{1,p}^{\min}(\Omega)$.

For an integer $m \geq 1$, the next figure shows how to construct a membrane $\Omega$ for which $\lambda_{1,p}^{\min}(\Omega)$ admits precisely $m$ extremizers in $\s({\cal H})$:

\begin{figure}[h]

\centering
\includegraphics[width=0.13\linewidth]{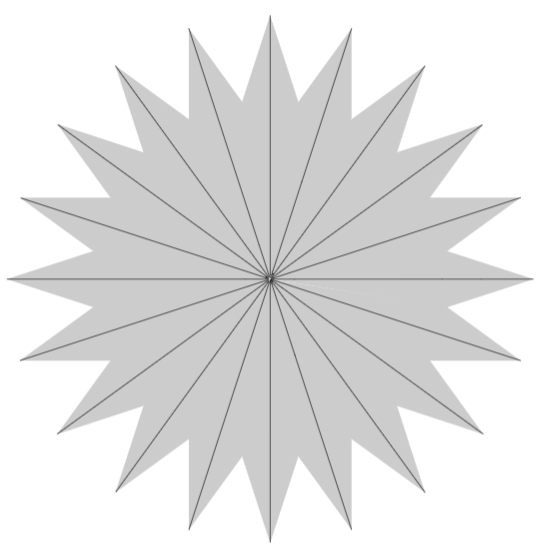}
\caption*{{\scriptsize Star shape: $m=10$}}
\end{figure}

\n It is also interesting to point out in this example that $\lambda_{1,p}^H(\Omega)$ is never a fundamental frequency for any anisotropy $H \in {\cal H}^D \setminus \{0\}$.







Finally, we close the section with the two sharp shapes inequalities which provide upper and lower controls of the optimal lower constant $\lambda_{1,p}^{\min}(\Omega)$ in terms of $\Omega$. Indeed, consider the following inequalities introduced in Subsection \ref{ssec: Shapes optimizations} for any $p > 1$:

\begin{equation}
\lambda_{1,p}^{\min}(\Omega) \geq \lambda_{1,p}(0,1)\, {\rm diam}(\Omega)^{-p} \tag{ID-min}
\end{equation}

\begin{equation}
\lambda_{1,p}^{\min}(\Omega) \leq \lambda_{1,p}(0,2/\sqrt{\pi})\, \vert \Omega \vert^{-p/2} \tag{IP-min}
\end{equation}

For these inequalities, we ensure that:

\begin{teor}[shape optimization] \label{T5}
The sharp inequality \eqref{ID-min} holds for any membrane $\Omega$, while the sharp inequality \eqref{IP-min} holds only for convex membranes $\Omega$. Moreover:

\begin{itemize}
\item[(i)] Equality holds in \eqref{ID-min} if and only if ${\rm diam}(\Omega) = \sup_{\theta \in [0, \pi]} L_\theta$;
\item[(ii)] Equality holds in \eqref{IP-min} if and only if $\Omega$ is a disk.
\end{itemize}
\end{teor}
\n The condition in (i) is clearly satisfied for any convex membrane and for many non-convex ones, such as the above $S$-shaped membrane, whereas it does not hold for any annulus and also for some conveniently constructed membranes without any holes.

\section{Shapes versus eigenfunctions: the degenerate case}

This section is dedicated to detailed proofs of Theorems \ref{T1}, \ref{T2} and \ref{T2.1}.

\subsection{A result on dimension reduction in PDEs}

We present a key result in the proof of Theorem \ref{T1} about trace of weak solutions of a degenerate Dirichlet problem. For a bounded domain $\Omega \subset \R^2$, consider the bounded open interval $I \subset \R$ such that $I = \{x \in \R :\, \Omega_x \neq \emptyset\}$, where $\Omega_{x} = \left\{y \in \R :\, (x,y) \in \Omega\right\}$.

\begin{propo}\label{propsol} Assume $\Omega \subset \R^2$ is any bounded domain and let $p > 1$. Let $H(x,y) = |y|$ and let $f: \R \rightarrow \R$ be a continuous function satisfying the additional growth condition if $1 < p \leq 2$:

\[
\vert f(t) \vert \leq C (\vert t \vert^q + 1),\ \forall t \in \R,
\]
where $0 \leq q \leq \frac{2p}{2 - p}$ if $p < 2$ and $q \geq 0$ if $p = 2$.

Let $u \in W^{1,p}_0(\Omega)$ be a weak solution of the equation

\begin{equation} \label{Dpde}
- \Delta_{p}^H u = f(u) \ \ \text{in} \ \ \Omega.
\end{equation}
Then, for almost every $x \in I$, the restriction

\[
u_{x}(y) = u(x,y),\ \forall y \in \Omega_{x}
\]
is a weak solution in $W^{1,p}_0(\Omega_x)$ of the one-dimensional equation

\[
-\left( |v^\prime|^{p-2} v^\prime \right)^\prime = f(v) \ \ \text{in} \ \ \Omega_{x}.
\]
\end{propo}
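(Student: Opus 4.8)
The starting point is that for $H(x,y)=|y|$ the operator is purely one-dimensional in the transversal variable: since $\nabla H(\xi_1,\xi_2)=(0,\operatorname{sgn}\xi_2)$, one has $H^{p-1}(\nabla u)\nabla H(\nabla u)=(0,|u_y|^{p-2}u_y)$, so $-\Delta_p^H u=-\partial_y(|u_y|^{p-2}u_y)$ and the weak form of \eqref{Dpde} reads
\[
\iint_\Omega |u_y|^{p-2}u_y\,\varphi_y\,dA=\iint_\Omega f(u)\,\varphi\,dA\qquad\text{for all }\varphi\in C_c^\infty(\Omega).
\]
I would prove the proposition by a dimension-reduction argument in three steps: (i) slice $u$ into the functions $u_x=u(x,\cdot)$ and check they lie in $W^{1,p}_0(\Omega_x)$; (ii) test the weak form only against product functions $\varphi(x,y)=\eta(x)\psi(y)$ supported in coordinate rectangles whose closures are compact in $\Omega$, and apply Fubini together with the fundamental lemma of the calculus of variations in the variable $x$; (iii) patch the resulting one-dimensional identities into a genuine weak formulation on each slice.

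For step (i), I would take $u_n\in C_c^\infty(\Omega)$ with $u_n\to u$ in $W^{1,p}_0(\Omega)$; each slice $(u_n)_x$ lies in $C_c^\infty(\Omega_x)$, and since $|\partial_y w|\le|\nabla w|$, Fubini gives
\[
\int_I\!\left(\|(u_n)_x-u_x\|_{L^p(\Omega_x)}^p+\|\partial_y(u_n-u)(x,\cdot)\|_{L^p(\Omega_x)}^p\right)dx=\|u_n-u\|_{L^p(\Omega)}^p+\|\partial_y(u_n-u)\|_{L^p(\Omega)}^p\longrightarrow0,
\]
where $u_x$ is understood through the absolutely-continuous-on-lines representative of $u$, so that $u_x\in W^{1,p}(\Omega_x)$ with $u_x'=(\partial_y u)(x,\cdot)$ for a.e.\ $x$. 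Passing to a subsequence gives $(u_n)_x\to u_x$ in $W^{1,p}(\Omega_x)$ for a.e.\ $x$, and since $W^{1,p}_0(\Omega_x)$ is closed in $W^{1,p}(\Omega_x)$, this yields $u_x\in W^{1,p}_0(\Omega_x)$ for a.e.\ $x\in I$.

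For step (ii), I would first observe that the growth restriction on $f$ is exactly what guarantees $f(u)\in L^1(\Omega)$: by the planar Sobolev embedding ($W^{1,p}_0(\Omega)\hookrightarrow L^{2p/(2-p)}$ when $p<2$, into every $L^r$ when $p=2$, into $L^\infty$ when $p>2$), one has $|f(u)|\le C(|u|^q+1)\in L^1(\Omega)$ for the admissible exponents $q$, using boundedness of $\Omega$. Writing $\Omega=\bigcup_{i\ge1}R_i$ with $R_i=(a_i,b_i)\times(c_i,d_i)$ open rectangles having compact closure in $\Omega$, for $\eta\in C_c^\infty(a_i,b_i)$ and $\psi\in C_c^\infty(c_i,d_i)$ the product $\varphi=\eta\otimes\psi$ lies in $C_c^\infty(R_i)\subset C_c^\infty(\Omega)$; since $|u_y|^{p-1}\in L^{p/(p-1)}(\Omega)$ and $f(u)\in L^1(\Omega)$, Fubini turns the weak identity into
\[
\int_{a_i}^{b_i}\eta(x)\left(\int_{c_i}^{d_i}|u_y|^{p-2}u_y\,\psi'\,dy-\int_{c_i}^{d_i}f(u)\,\psi\,dy\right)dx=0 .
\]
The bracket is an $L^1(a_i,b_i)$ function of $x$, so the fundamental lemma forces it to vanish for a.e.\ $x$ — a priori depending on $\psi$. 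Choosing $\{\psi_k\}$ countable and dense in $C_c^\infty(c_i,d_i)$ produces a single null set $N_i\subset(a_i,b_i)$ off which the bracket vanishes for all $\psi_k$, and, since $|u_x'|^{p-1}$ and $f(u_x)$ belong to $L^1(c_i,d_i)$ for a.e.\ $x$, a routine approximation upgrades this to: for a.e.\ $x\in(a_i,b_i)$,
\[
\int_{c_i}^{d_i}|u_x'|^{p-2}u_x'\,\psi'\,dy=\int_{c_i}^{d_i}f(u_x)\,\psi\,dy\qquad\text{for all }\psi\in C_c^\infty(c_i,d_i).
\]

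For step (iii), I would discard the null set of step (i) together with $N:=\bigcup_i N_i$ (noting $I=\bigcup_i(a_i,b_i)$), fix $x\in I\setminus N$ and $\psi\in C_c^\infty(\Omega_x)$, cover $\operatorname{supp}\psi$ by finitely many intervals $(c_i,d_i)$ with $x\in(a_i,b_i)$ — possible since each point of $\operatorname{supp}\psi$ yields a point of $\Omega$ lying in some $R_i$ — and use a subordinate partition of unity to write $\psi=\sum\psi_i$ with $\psi_i\in C_c^\infty(c_i,d_i)$; summing the one-dimensional identities from step (ii) then shows that $u_x$ solves $-(|v'|^{p-2}v')'=f(v)$ weakly in $\Omega_x$, which together with step (i) is the assertion. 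The delicate point will be precisely steps (ii)--(iii): promoting the single global weak identity on $\Omega$, tested only against products on coordinate rectangles, to a weak formulation on each slice $\Omega_x$ (which may be disconnected and varies with $x$) with an exceptional set of $x$ chosen once and for all, independently of the test function. The growth hypothesis enters only to make Fubini's theorem applicable to the term $f(u)$; everything else is soft analysis.
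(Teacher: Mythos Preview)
Your argument is correct and shares the core mechanism with the paper's proof: test the weak formulation against product functions $\eta(x)\psi(y)$, apply Fubini, and invoke the fundamental lemma of the calculus of variations in the $x$-variable. The slicing in step~(i) is essentially the content of the result from Leoni's book that the paper cites, and your explicit use of a countable dense family of $\psi$'s to obtain a single exceptional null set is a point the paper glosses over.

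The implementations diverge in how the varying geometry of $\Omega_x$ is handled. The paper first assumes $\partial\Omega\in C^1$, so that each slice $\Omega_x=(c_x,d_x)$ depends $C^1$-smoothly on $x$; it then builds test functions $\phi(x,y)=\eta(x)\,\varphi\bigl((y-c_x)/(d_x-c_x)\bigr)$ that are adapted to the whole slice at once, obtaining the one-dimensional identity on all of $\Omega_x$ directly. General domains are then reached by exhausting $\Omega$ from inside by $C^1$ subdomains. Your approach bypasses boundary regularity entirely: you cover $\Omega$ by countably many coordinate rectangles $R_i\Subset\Omega$, obtain the identity on each $(c_i,d_i)$, and patch on every slice via a partition of unity subordinate to the covering $\{(c_i,d_i):x\in(a_i,b_i)\}$ of $\operatorname{supp}\psi$. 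This is more elementary and avoids the two-step (smooth case, then approximation) structure; the paper's route, in exchange, produces the slice identity on all of $\Omega_x$ in one stroke once the parametrization is set up.
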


\begin{proof} Let $u \in W^{1,p}_0(\Omega)$ be a weak solution of \eqref{Dpde}. The initial claim that $u_x \in W^{1,p}_0(\Omega_x)$ for almost every $x \in I$ follows readily from Theorem 10.35 of \cite{Le}. By Morrey's Theorem, it is clear that $f(u_x) \in L^1(\Omega_x)$ for almost every $x \in I$. Besides, the growth assumption assumed on $f$ guarantees that $f(u) \in L^1(\Omega)$ for any $p > 1$.

We next organize the proof of the second claim into two steps.

Assume first that the boundary of $\Omega$ is of $C^1$ class. For each $x \in I$, the set $\Omega_{x}$ is a finite union of open intervals, so it is enough to prove for the case that $\Omega_{x}$ is an interval to be denoted by $(c_x,d_x)$. Using that $\partial \Omega$ is $C^1$, then the functions


\begin{equation} \label{functionstrip}
x \mapsto c_x\ \ {\rm and}\ \ x \mapsto d_x
\end{equation}
are piecewise $C^1$ in $I$. For each $x \in I$, consider the map

\begin{align*}
\Gamma_x \colon & C^{\infty}_{0}(0,1) \longrightarrow C^{\infty}_{0}\left(\Omega_x\right) \\
& \varphi \longmapsto \varphi\left(\frac{\mathord{\cdot}-c_x}{d_x - c_x}\right).
\end{align*}
Clearly, $\Gamma_x$ is a bijection. Take now any $\varphi \in C^{\infty}_{0}(0,1)$ and choose $x_{0} \in I$ and $\delta_{x_{0}} > 0$ so that the functions in \eqref{functionstrip} are $C^1$ in $(x_{0}-\delta_{x_{0}}, x_{0}+\delta_{x_{0}})$ and in addition

\[
d_x - c_x \geq \frac{d_{x_{0}}-c_{x_{0}}}{2} > 0,\ \ \forall x \in (x_{0}-\delta_{x_{0}}, x_{0}+\delta_{x_{0}}).
\]
Let any $\eta \in C^{\infty}_{0}(x_{0}-\delta_{x_{0}},x_{0}+\delta_{x_{0}})$ and define the function $\phi(x,y) = \eta(x) \Gamma_{x}(\varphi(y))$. Notice that $\phi \in C^{1}_{0}(\Omega)$ and so taking $\phi$ as a test function in \eqref{Dpde}, we have

\[
\int_{x_{0}-\delta_{x_{0}}}^{x_{0}+\delta_{x_{0}}} \int_{\Omega_{x}}\vert \frac{\partial u}{\partial y}(x,y) \vert^{p-2}\frac{\partial u}{\partial y}(x,y) \frac{\eta(x) \Gamma_{x}(\varphi^\prime(y))}{d_{x} - c_{x}}\, dydx  = \int_{x_{0}-\delta_{x_{0}}}^{x_{0}+\delta_{x_{0}}} \int_{\Omega_{x}}f(u(x,y)) \eta(x) \Gamma_{x}(\varphi(y))\, dydx.
\]
On the other hand, this equality can be rewritten as

\[
\int_{x_{0}-\delta_{x_{0}}}^{x_{0}+\delta_{x_{0}}}\left[\int_{\Omega_{x}}\vert \frac{\partial u}{\partial y}(x,y) \vert^{p-2}\frac{\partial u}{\partial y}(x,y)\frac{\Gamma_{x}(\varphi^\prime(y))}{d_{x} - c_{x}}\, dy - \int_{\Omega_{x}}f(u(x,y))\Gamma_{x}(\varphi(y)) \, dy\right] \eta(x)\, dx = 0
\]
for all $\eta \in C^{\infty}_{0}(x_{0}-\delta_{x_{0}},x_{0}+\delta_{x_{0}})$. Since the function

\[
x \mapsto \int_{\Omega_{x}}\vert \frac{\partial u}{\partial y}(x,y) \vert^{p-2}\frac{\partial u}{\partial y}(x,y)\frac{ \Gamma_{x}(\varphi^\prime(y))}{d_{x}-c_{x}} \, dy - \int_{\Omega_{x}}f(u(x,y)) \Gamma_{x}(\varphi(y))\, dy
\]
belongs to $L^{1}_{loc}(x_{0} - \delta_{x_{0}}, x_{0}+\delta_{x_{0}})$, we must have

\[
\int_{\Omega_{x}} \vert \frac{\partial u}{\partial y}(x,y) \vert^{p-2}\frac{\partial u}{\partial y}(x,y)\frac{ \Gamma_{x}(\varphi^\prime(y))}{d_{x}-c_{x}}\, dy - \int_{\Omega_{x}}f(u(x,y)) \Gamma_{x}(\varphi(y))\, dy = 0
\]
for almost every $x$ in $(x_{0} - \delta_{x_{0}}, x_{0}+\delta_{x_{0}})$.

Consequently, the function $u_{x}(y) = u(x,y)$ satisfies

\[
\int_{\Omega_{x}}|u_{x}^\prime|^{p-2}u_{x} \psi^\prime\, dy = \int_{\Omega_{x}}f(u_{x}) \psi\, dy
\]
for every $\psi \in C^{\infty}_{0}(\Omega_{x})$ for almost every $x$ in $(x_{0} - \delta_{x_{0}}, x_{0}+\delta_{x_{0}})$. Since we can cover $I$, module a countable set, by an enumerate union of intervals like $(x_{k} - \delta_{x_{k}}, x_{k} + \delta_{x_{k}})$, we get

\[
\int_{\Omega_{x}}|u_{x}^\prime|^{p-2}u_{x} \psi^\prime\, dy = \int_{\Omega_{x}}f(u_{x}) \psi\, dy
\]
for every $\psi \in C^{\infty}_{0}(\Omega_{x})$ for almost every $x$ in $I$.

Consider now $\Omega$ an arbitrary bounded domain. One always can write

\[
\Omega = \bigcup_{j=1}^{\infty} \Omega_{j},
\]
where $\Omega_{j}$ are bounded domains with $C^1$ boundary and $\Omega_{j} \subseteq \Omega_{j+1}$ for all $j \in \N$. If $\psi \in C^{\infty}_{0}(\Omega_{x})$, then there is $j_{0} \in \N$ such that the support of $\psi$ is contained in $(\Omega_{j})_{x}$ for all $j \geq j_{0}$. Thus, from the first part, $u_{x}(y) = u(x,y)$ satisfies

\[
\int_{(\Omega_{j})_{x}}|u_{x}^\prime|^{p-2} u_{x}^\prime \psi^\prime\, dy = \int_{(\Omega_{j})_{x}} f(u_{x}) \psi\, dy
\]
for almost every $x$ in $I$ (module a countable union of countable sets) and all $j \geq j_{0}$. Taking $j \to \infty$ in the above equality, we derive

\[
\int_{\Omega_{x}}|u_{x}^\prime|^{p-2}u_{x}^\prime \psi^\prime\, dy = \int_{\Omega_{x}} f(u_{x})\psi\, dy
\]
almost everywhere for $x \in I$, which proves the proposition.
\end{proof}

\subsection{Characterization of fundamental frequencies}

We now concentrate on the proof of Theorem \ref{T1} which states that the items $(i)$, $(ii)$ and $(iii)$ in the part $(b)$ are necessary and sufficient for $\lambda^H_{1,p}(\Omega)$ to be a fundamental frequency.

\begin{proof}[Proof of Theorem \ref{T1}] Let $\Omega \subset \R^2$ be a membrane and let $H \in {\cal H}^D \setminus \{0\}$. By Proposition \ref{P1}, one knows that $H(x,y) = c\left| \cos \theta\, x + \sin \theta\, y \right|$ for some $c > 0$ and $\theta \in [0,\pi]$. Let $A$ be a rotation matrix such that $H_{A}(x,y) = c \vert y \vert$. By Property \ref{PropertyP4}, for convenience we assume that $H = H_{A}$ and $\Omega = \Omega_{A}$. By Property \ref{PropertyP1}, we also consider $c = 1$.

For a bounded domain $\Omega$, the existence of a bounded open interval $I$ which satisfies $(i)$ is clearly assured. Note also that $\Omega_x := \{ y \in \R:\, (x,y) \in \Omega\}$ is a nonempty open set for every $x \in I$. Since $H$ is nonzero, by Proposition \ref{P.1}, $\lambda^H_{1,p}(\Omega)$ is positive. So, there is a number $L > 0$ such that $\lambda^H_{1,p}(\Omega) = \lambda_{1,p}(0,L)$.

For such choices of $I$ and $L$, we show $(ii)$ by contradiction. Assume that there is a point $x_{0} \in I$ such that $\Omega_{x_0}$ contains a connected component $\tilde{\Omega}_{x_0}$ whose measure is greater than $L$. Then, we can take a rectangle $R_1 = (a_1, b_1) \times (c_1, d_1) \subset \Omega$ with $L_1 := d_1 - c_1 > L$. Let $\psi_p \in W^{1,p}_0(c_1, d_1)$ be an eigenfunction of the one-dimensional $p$-laplacian corresponding to $\lambda_{1,p}\left( 0,L_1 \right)$ normalized by $\Vert \psi_p \Vert_p = 1$ and let $\varphi \in W^{1,p}_0(a_1, b_1)$ be any function satisfying $\Vert \varphi \Vert_p = 1$. Define $u(x,y) := \varphi(x) \psi_p(y)$. It is clear that $u \in W_0^{1,p}(R_1) \subset W_0^{1,p}(\Omega)$ and $\Vert u \Vert_p = 1$. In addition,

\begin{eqnarray*}
\iint_{\Omega} H^p(\nabla u)\, dA &=& \int_{a_1}^{b_1} \int_{c_1}^{d_1} \vert \frac{\partial u}{\partial y}(x,y) \vert^p\, dy\, dx = \int_{a_1}^{b_1} \vert \varphi(x) \vert^p\, dx \int_{c_1}^{d_1} \vert \psi^\prime_p(y) \vert^p\, dy \\
&=& \int_{c_1}^{d_1} \vert \psi^\prime_p(y) \vert^p\, dy = \lambda_{1,p}\left(0, L_1 \right) \int_{c_1}^{d_1} \vert \psi_p(y) \vert^p\, dy = \lambda_{1,p}\left(0, L_1 \right).
\end{eqnarray*}
This gives the contradiction $\lambda^H_{1,p}(\Omega) \leq \lambda_{1,p}\left( 0, L_1 \right) < \lambda_{1,p}\left( 0, L \right)$.

We now focus on the proof of $(iii)$. Here we use the assumption that $\lambda^H_{1,p}(\Omega)$ is a fundamental frequency. Let $u_p \in W^{1,p}_0(\Omega)$ be a corresponding membrane eigenfunction. From the homogeneity of $H$, we can assume $u_p$ is nonnegative in $\Omega$.

Let $u = u_p$. We recall that
\[
-\Delta_p^H u(x,y) = -\frac{\partial}{\partial y} \left( \vert \frac{\partial u}{\partial y}(x,y)\vert^{p-2} \frac{\partial u}{\partial y}(x,y) \right) =: -\Delta_{p,y} u(x,y)\ \ {\rm in}\ \ \Omega_x,
\]
in other words, $-\Delta_p^H$ can be viewed as the one-dimensional $p$-Laplace operator $-\Delta_{p,y}$ on $\Omega_x$ for $x \in I$. So, applying Proposition \ref{propsol} to the function $f(t) = \lambda^H_{1,p}(\Omega) \vert t \vert^{p-2} t$ for $p > 1$, it follows that $u(x,\cdot) \in W^{1,p}_0(\Omega_x)$ is a nonnegative weak solution of

\[
-\Delta_{p,y} u(x,y) = \lambda^H_{1,p}(\Omega) u(x,y)^{p-1}\ \ {\rm in}\ \ \Omega_x
\]
for almost every $x \in I$. By elliptic regularity, we have $u(x,\cdot) \in C^1(\bar{\Omega}_x)$ and moreover either $u(x,\cdot) = 0$ or $u(x,\cdot) > 0$ in each connected component $\tilde{\Omega}_x$ of $\Omega_x$. In the second case, one concludes that $\lambda^H_{1,p}(\Omega)$ is precisely the fundamental frequency $\lambda_{1,p}(\tilde{\Omega}_x)$ associated to the operator $-\Delta_{p,y}$ on $\tilde{\Omega}_x$.

We next prove that there is an open subinterval $I^\prime \subset I$ such that $u(x,\cdot) > 0$ for some component $\tilde{\Omega}_x$ of $\Omega_x$ and so $\lambda_{1,p}(\tilde{\Omega}_x) = \lambda^H_{1,p}(\Omega)$ for every $x \in I^\prime$. Since $u$ is nonzero somewhere in $\Omega$, there is a subset $X \subset I$ with $\vert X \vert > 0$ (i.e. positive Lebesgue measure) such that $u(x,\cdot) > 0$ in some connected component $\tilde{\Omega}_x$ of $\Omega_x$ for every $x \in X$. Similarly to the definition of $I$, let $J$ be the bounded open interval such that $\Omega \subset \R \times J$ and $\Omega^y := \{x \in \R: \, (x,y) \in \Omega\} \neq \emptyset$ for every $y \in J$. We know that $u(\cdot,y) \in W^{1,p}_0(\Omega^y)$ and so, by Morrey's Theorem, $u(\cdot,y) \in C(\bar{\Omega}^y)$ for almost every $y \in J$.

Let $x_0 \in X$ and take $y_0 \in \tilde{\Omega}_{x_0}$ so that $u(\cdot,y_0) \in C(\bar{\Omega}^{y_0})$. Since $u(x_0,y_0) > 0$, by continuity, there is a number $\delta > 0$ such that $u(x,y_0) > 0$ for every $x \in I^\prime:= (x_0 - \delta, x_0 + \delta)$. Consequently, $u(x,\cdot) > 0$ in $\tilde{\Omega}_x$ (here $\tilde{\Omega}_x$ is the connected component of $\Omega_x$ such that $(x,y_{0})$ belongs to it), and so $\lambda_{1,p}(\tilde{\Omega}_x) = \lambda^H_{1,p}(\Omega) = \lambda_{1,p}\left(0,L \right)$ for every $x \in I^\prime$, which is equivalent to the equality $\vert \tilde{\Omega}_x \vert = L$  for every $x \in I^\prime$. Furthermore, the $C^{0,1}$ regularity of $\partial \Omega$ implies that $\Omega^\prime:= \{ (x,y) \in \R^2:\, x \in I^\prime,\, y \in \tilde{\Omega}_x \}$ is a $C^{0,1}$ sub-membrane of $\Omega$. Hence, the assertion $(iii)$ holds for the number $L$ as defined above.

Conversely, let $\Omega \subset \R^2$ be a membrane and let $H \in {\cal H}^D \setminus \{0\}$ satisfying the conditions $(i)$, $(ii)$ and $(iii)$, where $A$ is a rotation matrix such that $H_A(x,y) = c \vert y \vert$. By Property \ref{PropertyP4}, it suffices to construct a minimizer for $\lambda^{H_A}_{1,p}(\Omega_A)$ in $W_0^{1,p}(\Omega_A)$. Again we set $\Omega = \Omega_A$, $H = H_A$ and assume $c = 1$.

Let $I^\prime$ and $I$ be as in the statement $(b)$ and $\Omega_x$ as introduced above. Given any $u \in W_0^{1,p}(\Omega)$, we recall that $u(x, \cdot) \in W_0^{1,p}(\Omega_x)$ for almost every $x \in I$. Then, using $(ii)$ and the one-dimensional Poincaré inequality, we get

\begin{eqnarray*}
\iint_\Omega H^p(\nabla u)\, dA &=& \int_{I} \int_{\Omega_x} \vert \frac{\partial u}{\partial y}(x,y) \vert^p\, dy dx \\
&\geq& \int_{I} \lambda_{1,p}\left(0,L \right) \int_{\Omega_x} \vert u(x,y) \vert^p\, dy dx \\
&=& \lambda_{1,p}\left(0,L \right) \iint_\Omega \vert u(x,y) \vert^p\, dA,
\end{eqnarray*}
so that $\lambda^H_{1,p}(\Omega) \geq \lambda_{1,p}\left(0,L \right)$.

We now construct a function $u_0 \in W_0^{1,p}(\Omega)$ such that $\Vert u_0 \Vert_p = 1$ and

\[
\iint_\Omega H^p(\nabla u_0)\, dA = \lambda_{1,p}\left(0,L \right).
\]
Since the boundary of $\Omega$ is of $C^{0,1}$ class and $(iii)$ is satisfied, there are an open interval $I_0 \subset I^\prime$ and a Lipschitz function $g : I_0 \rightarrow \R$ such that $\Omega_0:= \{(x,y):\, x \in I_0,\, g(x) < y < g(x) + L\} \subset \Omega^\prime \subset \Omega$. Let $\psi_p \in W_0^{1,p}(0,L)$ be the positive eigenfunction for the one-dimensional $p$-laplacian corresponding to $\lambda_{1,p}\left(0,L \right)$ with $\Vert \psi_p \Vert_p = 1$ and take any function $\varphi \in W_0^{1,p}(I_0)$ with $\Vert \varphi \Vert_p = 1$. Define

\begin{equation*}
u_0(x,y) =
\begin{cases}
\begin{aligned}
    &\varphi(x) \psi_p(y - g(x)), \ \text{if} \ (x,y) \in \Omega_0 \\
    & 0, \ \text{if} \ (x,y) \in \Omega\setminus\Omega_{0}.
\end{aligned}
\end{cases}
\end{equation*}
Then, $u_0 \in W_0^{1,p}(\Omega_0) \subset W_0^{1,p}(\Omega)$ satisfies

\begin{eqnarray*}
\iint_\Omega H^p(\nabla u_0)\, dA &=& \iint_{\Omega_0} H^p(\nabla u_0)\, dA = \int_{I_0} \int_{g(x)}^{g(x) + L} \vert \varphi(x) \vert^p \vert \psi_p^\prime(y - g(x)) \vert^p\, dy dx \\
&=& \int_{I_0} \int_0^L \vert \varphi(x) \vert^p \vert \psi_p^\prime(y) \vert^p\, dy dx = \int_{I_0} \vert \varphi(x) \vert^p\, dx \int_0^L \vert \psi_p^\prime(y) \vert^p\, dy\\
&=& \lambda_{1,p}\left(0,L \right) \int_{I_0} \vert \varphi(x) \vert^p\, dx \int_0^L \vert \psi_p(y) \vert^p\, dy = \lambda_{1,p}\left(0,L \right)
\end{eqnarray*}
and

\begin{eqnarray*}
\iint_\Omega \vert u_0 \vert^p\, dA &=& \iint_{\Omega_0} \vert u_0 \vert^p\, dA = \int_{I_0} \int_{g(x)}^{g(x) + L} \vert \varphi(x) \vert^p \vert \psi_p(y - g(x)) \vert^p\, dy dx \\
&=& \int_{I_0} \int_0^L \vert \varphi(x) \vert^p \vert \psi_p(y) \vert^p\, dy dx \\
&=& \int_{I_0} \vert \varphi(x) \vert^p\, dx \int_0^L \vert \psi_p(y) \vert^p\, dy = 1.
\end{eqnarray*}
Therefore, $\lambda^H_{1,p}(\Omega) = \lambda_{1,p}\left(0,L \right)$ and $u_0$ is a membrane eigenfunction for $\lambda^H_{1,p}(\Omega)$. This completes the proof.
\end{proof}

\begin{rem}\label{eigencharrem} From the proof of Theorem \ref{T1}, one can extract for $H(x,y) = |y|$ that if the membrane $\Omega$ has $C^{0,1}$ boundary and satisfies the conditions $(i)-(iii)$ and $u_p \in W^{1,p}_0(\Omega)$ is a minimizer for $\lambda_{1,p}^{H}(\Omega)$, which by homogeneity can be assumed nonnegative, then there are a $C^{0,1}$ sub-membrane $\Omega_{0} \subset \Omega$, an open subinterval $I_{0} \subset I$ and a Lipschitz function $g \colon I_{0} \to \R$ such that

\[
\Omega_0 = \left\{(x,y) \in \Omega \colon  g(x) < y < g(x) + L\right\}
\]
and $u_p(x,\cdot)$ is a positive eigenfunction of the one-dimensional $p$-laplacian for every $x \in I_0$. In particular,

\[
u_p(x,y) = \varphi(x) \psi_p(y - g(x)), \, \forall (x,y) \in \Omega_{0},
\]
where $\psi_p$ is the positive eigenfunction for the one-dimensional $p$-laplacian on $(0,L)$ and $\varphi \in W^{1,p}_{0}(I_0)$, both $L^p$-normalized. The sets $\Omega_{0}$ and $I_{0}$ can be obtained considering the maximal connected sets $\Omega'$ and $I'$ such that $u_p$ is strictly positive. In addition, Morrey's Theorem implies $\varphi \in C(\bar{I}_0)$ and the elliptic theory gives $\psi_p \in C^{1}[0,L]$. Note that $u_p$ is actually positive in $\Omega_0$ and zero on $\partial \Omega_0$. It is clear that this does not mean that $u_p$ is zero outside $\Omega_0$, just that we can ensure the existence of a subdomain such that $u_p \in W^{1,p}_{0}(\Omega_0)$.
\end{rem}

\subsection{Computation of least energy levels and the maximization conjecture}

This section is dedicated to the proof of Theorems \ref{T2} and \ref{T2.1}. Let $I$ and $\Omega_x$ be as in the previous subsection. For convenience, we next rename $L_{\frac{\pi}{2}}$ as

\[
L_\Omega = \sup \{ \vert J \vert:\,\ \text{for open intervals} \ J \subset \Omega_{x} \ \text{over all} \ x \in I\}.
\]

\begin{proof}[Proof of Theorem \ref{T2}]
Assume first that $H(x,y) = |y|$. When $\Omega$ is a rectangle whose sides are parallel to the coordinate axes, the statement $\lambda^H_{1,p}(\Omega) = \lambda_{1,p}(0,L_\Omega)$ follows directly from Theorem \ref{T1}. For an arbitrary membrane $\Omega$, note that for $u \in W_0^{1,p}(\Omega)$, since $u(x, \cdot) \in W_0^{1,p}(\Omega_x)$ for almost every $x \in I$, we have

\begin{eqnarray*}
\iint_\Omega H^p(\nabla u)\, dA &=& \int_I \int_{\Omega_x}H^p(\nabla u(x,y))\, dydx \\
&=& \int_I \int_{\Omega_x}  \vert \frac{\partial u}{\partial y}(x,y) \vert^p dydx \\
&\geq& \int_I \lambda_{1,p}(0,L_{\Omega}) \int_{\Omega_x} \vert u(x,y) \vert^p  dydx\\
&=& \lambda_{1,p}(0,L_{\Omega}) \iint_\Omega \vert u \vert^p\,  dA,
\end{eqnarray*}
so that $\lambda^H_{1,p}(\Omega) \geq \lambda_{1,p}(0,L_{\Omega})$.

On the other hand, there is a sequence of open rectangles $R_k = (a_k, b_k) \times (c_k, d_k) \subset \Omega$ such that $L_k = d_k - c_k \rightarrow L_\Omega$ as $k \rightarrow \infty$. Then, by Property \ref{PropertyP2} and the above remark, we obtain

\[
\lambda^H_{1,p}(\Omega) \leq \lambda^H_{1,p}(R_k) = \lambda_{1,p}(0,L_k),
\]
and letting $k \rightarrow \infty$, we derive

\[
\lambda^H_{1,p}(\Omega) \leq \lambda_{1,p}(0,L_\Omega).
\]
This concludes the statement for $H(x,y) = \vert y \vert$ and any membrane $\Omega$.

Now for any $H \in {\cal H}^D \setminus \{0\}$, since $H(x,y) = c\left| \cos \theta\, x + \sin \theta\, y \right|$, we have $H_A(x,y) = c \vert y \vert$ for the rotation matrix $A$ of angle $\theta - \frac{\pi}{2}$. By Property \ref{PropertyP1}, one can assume that $c = 1$. Thus, thanks to Property \ref{PropertyP4} and the first part, it follows that

\[
\lambda_{1,p}^{H}(\Omega) = \lambda_{1,p}^{H_A}(\Omega_A) = \lambda_{1,p}(0,L_{\Omega_A}).
\]
On the other hand, since $A^{T}(0,1) = (-\cos \theta, \sin \theta)$, we can describe $L_{\Omega_A}$ in an alternative way. From the invariance of Lebesgue measure under orthogonal transformations, we have

\begin{align*}
    L_{\Omega_A} &= \sup_{x \in \R}\left\{ |J| \colon J \subset (\{x\}\times\R) \cap \Omega_A \ \text {and} \ J \ \text{is connected} \right\} \\
    &= \sup_{v \in \R^2}\sup \left\{ |J| \colon J \subset \{t(0,1) + v \colon t \in \R\} \cap \Omega_A \ \text{and} \ J  \ \text{is connected}\right\} \\
    &= \sup_{v \in \R^2}\sup \left\{ |J| \colon A^{T}J \subset A^{T} \{t(0,1) + v \colon t \in \R\} \cap \Omega \ \text{and} \ J  \ \text{is connected}\right\} \\
    &= \sup_{v \in \R^2}\sup \left\{ |A^{T}J| \colon A^{T}J \subset \{tA^{T}(0,1) + A^{T}v \colon t \in \R\} \cap \Omega \ \text{and} \ A^{T}J  \ \text{is connected}\right\} \\
    &=\sup_{v \in \R^2}\sup \left\{ |J| \colon J \subset \{t(-\cos{\theta},\sin{\theta}) + Av \colon t \in \R\}\cap \Omega \ \text{and} \ J  \ \text{is connected}\right\}  \\
    &= \sup_{v \in \R^2}\sup \left\{ |J| \colon J \subset \{t(-\cos{\theta},\sin{\theta}) + v \colon t \in \R\}\cap \Omega \ \text{and} \ J  \ \text{is connected}\right\}  \\
    &= L_\theta.
\end{align*}
Therefore,

\[
\lambda_{1,p}^{H}(\Omega) = \lambda_{1,p}(0,L_\theta)
\]
and this ends the proof.
\end{proof}

As an application of this result, we give a very short proof of the maximization conjecture.

\begin{proof}[Proof of Theorem \ref{T2.1}]
Let any $H \in {\cal H} \setminus \{0\}$. As already argued in the proof of Proposition \ref{P.1}, there always exist a rotation matrix $A$ and a constant $c > 0$ such that $H_A(x,y) \geq H_0(x,y) := c \vert y \vert$. Consider for each integer $k \geq 1$ the open rectangle $\Omega_k = A(R_k)$ where $R_k = (0, k) \times (0, 1/k)$. Clearly, $\vert \Omega_k \vert = 1$ and, by Properties \ref{PropertyP3} and \ref{PropertyP4} and Theorem \ref{T2}, we have

\[
\lambda_{1,p}^H(\Omega_k) = \lambda_{1,p}^{H_A}(R_k) \geq \lambda_{1,p}^{H_0}(R_k) = c^p \lambda_{1,p}(0, 1/k).
\]
Then, letting $k \rightarrow \infty$ in this inequality, we deduce that $\lambda_{1,p}^H(\Omega_k) \rightarrow \infty$ and the proof follows.
\end{proof}

\section{Solution of the isoanisotropic problems}

This section is devoted to the complete proof of Theorems \ref{T3} and \ref{T4}. Particularly, Remark \ref{eigencharrem} related to Theorem \ref{T1} and Theorem \ref{T2} play a fundamental role in the latter.

\subsection{Anisotropic rigidity of maxima optimization}

\begin{proof}[Proof of Theorem \ref{T3}]

The computation of the optimal upper anisotropic constant $\lambda_{1,p}^{\max}(\Omega)$ is immediate. In effect, for any $H \in \mathbb{S}({\cal H})$, we have

\begin{eqnarray*}
\lambda^H_{1,p}(\Omega) &=& \inf \left\{ \iint_\Omega H^p(\nabla u)\, dA:\ u \in W^{1,p}_0(\Omega),\ \Vert u \Vert_{L^p(\Omega)} = 1 \right\}\\
&\leq& \inf \left\{ \iint_\Omega \vert \nabla u \vert^p\, dA:\ u \in W^{1,p}_0(\Omega),\ \Vert u \Vert_{L^p(\Omega)} = 1 \right\}\\
&=& \lambda^{\bar{H}}_{1,p}(\Omega)
\end{eqnarray*}
since $H \leq \bar{H}$, where $\bar{H}(x,y) = \vert (x,y) \vert$ denotes the Euclidean norm. This fact along with $\bar{H} \in \s({\cal H})$ yield $\lambda_{1,p}^{\max}(\Omega) = \lambda^{\bar{H}}_{1,p}(\Omega) = \lambda_{1,p}(\Omega)$.

We next establish the rigidity of the above equality provided that the boundary of $\Omega$ is $C^{1,\alpha}$. It suffices to show that

\[
\lambda^H_{1,p}(\Omega) < \lambda^{\bar{H}}_{1,p}(\Omega)
\]
for every $H \in \mathbb{S}({\cal H})$ with $H \neq \bar{H}$.

For such an anisotropy $H$, we have $H(\cos \theta, \sin \theta) < 1$ for some $\theta \in [0,\pi]$. Consider the family of half planes $E_\mu = \{(x,y) \in \R^2:\ \cos \theta\, x + \sin \theta\, y > \mu\}$ for $\mu \in \R$ and define

\[
\mu_0 = \inf\{\mu \in \R:\ E_\mu \cap \Omega \neq \emptyset\}.
\]
Since $\Omega$ is bounded, we have that $\mu_0$ is finite and $\partial E_{\mu_0} \cap \partial \Omega \neq \emptyset$.

Let now $u_p \in W^{1,p}_0(\Omega)$ be a positive eigenfunction for $\lambda_{1,p}(\Omega)$ with $\Vert u \Vert_p = 1$. From the elliptic regularity theory of the $p$-laplace operator (see e.g. \cite{To1}), it is well known that $u_p \in C^1(\overline \Omega)$ since $\partial \Omega \in C^{1,\alpha}$. Moreover, by Hopf's Lemma, we have $\frac{\partial u_p}{\partial \nu} < 0$ on $\partial \Omega$, where $\nu$ denotes the outward unit normal field to $\Omega$. Hence, $\nabla u_p$ is an exterior nonzero normal field on $\partial \Omega$. Then, for $(x_0, y_0) \in \partial E_{\mu_0} \cap \partial \Omega$, we have $\nabla u_p(x_0,y_0) = \vert \nabla u_p(x_0,y_0) \vert (\cos \theta, \sin \theta)$. Therefore, $H(\nabla u_p(x_0, y_0)) = \vert \nabla u_p(x_0,y_0) \vert H(\cos \theta, \sin \theta) < \vert \nabla u_p(x_0, y_0) \vert$ and so $H(\nabla u_p) <  \vert \nabla u_p \vert$ in the set $\Omega_\delta = \Omega \cap B_\delta(x_0,y_0)$ for some $\delta > 0$ small enough. Using this strict inequality in $\Omega_\delta$ and $H(\nabla u_p) \leq  \vert \nabla u_p \vert$ in $\Omega \setminus \Omega_\delta$, we get

\begin{eqnarray*}
\lambda^H_{1,p}(\Omega) &\leq& \iint_\Omega H^p(\nabla u_p)\, dA = \iint_{\Omega_\delta} H^p(\nabla u_p)\, dA + \iint_{\Omega \setminus \Omega_\delta} H^p(\nabla u_p)\, dA \\
&<& \iint_\Omega \vert \nabla u_p \vert^p\, dA = \lambda_{1,p}(\Omega).
\end{eqnarray*}
\end{proof}

\subsection{Characterization of anisotropic extremizers for minima optimization}

\begin{propo}\label{propbound}
For any $H \in {\cal H}$, there is $H_0 \in {\cal H}^{D}$ such that $H \geq H_{0}$ and $\lVert H \rVert = \lVert H_{0} \rVert$.
\end{propo}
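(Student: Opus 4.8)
The plan is to produce the desired $H_0$ as a suitable degenerate anisotropy that "touches" $H$ at a point where $H$ attains its maximum over the unit circle. First I would pick a unit vector $(a,b)$ with $|(a,b)| = 1$ realizing $\lVert H \rVert = H(a,b) =: M$ (such a point exists by continuity of $H$ on the compact unit circle). If $M = 0$ then $H \equiv 0$ and we may take $H_0 = 0 \in {\cal H}^D$, so assume $M > 0$. The natural candidate is the degenerate anisotropy
\[
H_0(x,y) = M\,\bigl|\langle (x,y),(a,b)\rangle\bigr| = M\,|ax + by|,
\]
which lies in ${\cal H}^D$ by Proposition \ref{P1} (with $c = M$ and the angle $\theta$ determined by $(\cos\theta,\sin\theta) = (a,b)$ up to sign). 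By construction $\lVert H_0 \rVert = M = \lVert H \rVert$, so it only remains to verify the pointwise inequality $H \geq H_0$.

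The key step is the inequality $H(x,y) \geq M\,|ax+by|$ for all $(x,y)$. By $1$-homogeneity of both sides it suffices to check it on the unit circle, i.e. to show $H(v) \geq M\,|\langle v,(a,b)\rangle|$ for every unit vector $v$. This is where convexity enters: since $H$ is convex and $1$-homogeneous, it is in particular subadditive, and the function $t \mapsto H\bigl((a,b) + t\,w\bigr)$ for $w \perp (a,b)$ has a minimum at... hmm, actually the cleaner route is the following. The upper level considerations: because $(a,b)$ is a maximum point of the convex function $H$ on the (boundary of the) Euclidean unit ball, and $H$ is $1$-homogeneous, one shows that $(a,b)$ lies in the subdifferential-type relation $H(v) \geq M\langle v,(a,b)\rangle$ for all $v$ — indeed, set $\phi(t) = H(ta,tb) = |t|M$; since $H$ is convex, for any fixed $v$ the map $s \mapsto H\bigl((a,b) + s(v-(a,b))\bigr)$ is convex on $[0,1]$ with value $M$ at $s=0$; its right derivative at $0$ is at most $H(v) - M$ by convexity, while the radial behavior and maximality of $M$ on the unit sphere force that derivative to be $\geq 0$ after projecting appropriately. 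Replacing $v$ by $-v$ gives $H(v) \geq M|\langle v,(a,b)\rangle|$, hence $H \geq H_0$ everywhere.

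The main obstacle I anticipate is making the convexity argument for $H(v) \geq M\langle v,(a,b)\rangle$ fully rigorous without over-complicating it; the slick way is to invoke the support function characterization, namely that $(a,b)$ being a maximizer of the $1$-homogeneous convex $H$ on the Euclidean sphere means $(a,b)$ is (a scalar multiple of) an outward normal to the sublevel set $\{H \le M\}$ at its farthest point, equivalently $M(a,b)/M^2 = (a,b)/M \in \partial D_H^\circ$ type reasoning — or, most elementarily, just note that the linear functional $\ell(x,y) = M(ax+by)$ satisfies $\ell \leq \lVert H\rVert\,|\cdot| $... no: one uses directly that $\ell(a,b) = M = H(a,b)$ and that $\ell$ is dominated by $H$ on the whole space because $H - \ell$ is convex, $1$-homogeneous is lost, so instead argue $H(v)\ge \ell(v)$ by: if $H(v_0) < \ell(v_0)$ for some $v_0$, then by homogeneity scale so $H(v_0)<\ell(v_0)$ with $|v_0|$ comparable to $1$, and the midpoint $\tfrac12((a,b)+v_0)$ — using convexity of $H$ and linearity of $\ell$ — would have $H < \ell$ there too, iterating toward a point on the unit sphere where $H < \ell \le M$, contradicting nothing directly, so one must instead compare $H\bigl((a,b)+t(v_0-(a,b))\bigr)$ against the value $M$ as $t\to 0^+$: its one-sided derivative is $\ge 0$ since $(a,b)$ maximizes $H$ along the sphere and $H$ is radially homogeneous, yet convexity bounds that derivative above by $H(v_0)-M \le \ell(v_0)-M <0$, a contradiction. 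Once this lemma is in hand the proposition is immediate, so the write-up effort concentrates entirely there.
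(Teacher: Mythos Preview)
Your candidate $H_0(x,y) = M\,|ax+by|$, with $(a,b)$ a unit maximizer of $H$ on the Euclidean circle, is correct and in fact agrees with the paper's construction. The paper works with the sublevel set $C=\{H\le M\}$ and takes a supporting line to $C$ at the point $(\cos\theta_0,\sin\theta_0)=(a,b)$, using its normal direction $(\cos\theta,\sin\theta)$ for $H_0$. Since the closed unit disk lies inside $C$ and $(a,b)\in\partial C$, the tangent to the unit circle at $(a,b)$ is one such supporting line, which corresponds exactly to $\theta=\theta_0$, i.e.\ to your $H_0$.

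The genuine gap is in your verification of $H(v)\ge M\langle v,(a,b)\rangle$. Your contradiction argument asserts that the right derivative at $t=0$ of
\[
g(t)=H\bigl((a,b)+t(v_0-(a,b))\bigr)
\]
is $\ge 0$ ``since $(a,b)$ maximizes $H$ along the sphere and $H$ is radially homogeneous''. That step is unjustified and in fact false in general: already for the Euclidean norm $H=|\cdot|$ and any unit vector $v_0\neq(a,b)$ one gets $g(t)=|(1-t)(a,b)+tv_0|$ and $g'(0^+)=-(1-\langle v_0,(a,b)\rangle)<0$. The point $(a,b)$ maximizes $H$ \emph{on the sphere}, but the chord $t\mapsto(a,b)+t(v_0-(a,b))$ leaves the sphere immediately, and the radial contraction dominates. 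So the derivative comparison collapses.

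Two clean repairs. The paper's route: for $H\in{\cal H}^P$ the set $C=\{H\le M\}$ is a convex body containing the closed unit disk, with $(a,b)\in\partial C$; the line $\{\langle\,\cdot\,,(a,b)\rangle=1\}$ must support $C$ (otherwise a point $q\in C$ with $\langle q,(a,b)\rangle>1$, together with the open unit ball, forces $(a,b)\in\interior C$), whence $H(v)\le M$ implies $|\langle v,(a,b)\rangle|\le 1$, and scaling by $1$-homogeneity gives $H(v)\ge M|\langle v,(a,b)\rangle|$. Alternatively, in the subgradient language you were reaching for: pick any $w\in\partial H(a,b)$; testing the subgradient inequality on $v=t(a,b)$ and using $1$-homogeneity gives $\langle w,(a,b)\rangle=M$, while $|\langle w,v\rangle|\le H(v)\le M|v|$ yields $|w|\le M$, so equality in Cauchy--Schwarz forces $w=M(a,b)$. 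Then $H(v)\ge H(a,b)+\langle w,v-(a,b)\rangle=\langle w,v\rangle$, and evenness of $H$ upgrades this to $H(v)\ge M|\langle v,(a,b)\rangle|$.
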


\begin{proof}
The case where $H \in {\cal H}^D$ is trivial because the result follows for $H = H_0$. For $H \in {\cal H}^P$, by Proposition \ref{P1}, it suffices to show that there is $\theta \in [0,\pi]$ such that

\[
H(x,y) \geq \Vert H \Vert\, \left|x\cos{\theta}+y\sin{\theta} \right| =: H_0(x,y)
\]
for all $(x,y) \in \R^2$. Actually, since $H$ is a norm on $\R^2$, the set $C = H^{-1}\left(-\infty,\lVert H \rVert\right]$ is a convex body. Let $\theta_{0} \in [0,\pi]$ be such that $H(\cos{\theta_{0}},\sin{\theta_{0}}) = \Vert H \Vert$ and take the line $\mathbb{L}$ tangent to $C$ that passes through $(\cos{\theta_{0}},\sin{\theta_{0}})$. It is clear that there is $\theta \in [0,\pi]$ such that one can write this line as

\[
\mathbb{L} =  \{(x,y) \in \R^2 :\, x\cos{\theta}+y\sin{\theta} = \cos{\theta_{0}}\cos{\theta}+\sin{\theta_{0}}\sin{\theta} = \cos{(\theta - \theta_{0})}\}.
\]
Notice that

\[
x\cos{\theta}+y\sin{\theta} < \cos{(\theta - \theta_{0})}, \ \forall (x,y) \in \interior(C).
\]
For any $(x,y) \in \R^2$ such that $x\cos{\theta}+y\sin{\theta} \neq 0$, we see that the vector

\[
(z,w) = \frac{\cos{(\theta - \theta_{0})}}{x\cos{\theta}+y\sin{\theta}}(x,y)
\]
is in the complement of $\interior(C)$ because

\[
z \cos{\theta}+ w \sin{\theta} = \frac{\cos{(\theta - \theta_{0})}}{x\cos{\theta}+y\sin{\theta}}\left(x\cos{\theta}+y\sin{\theta}\right) = \cos{(\theta - \theta_{0})},
\]
and thus $H(z,w) \in [\Vert H \Vert, +\infty)$. In other words,

\[
H\left(\frac{\cos{(\theta - \theta_{0})}}{x\cos{\theta}+y\sin{\theta}}(x,y)\right) \geq \Vert H \Vert
\]
and so using the fact that $0 < |\cos{(\theta - \theta_{0})}| \leq 1$, we get

\[
H(x,y) \geq \frac{|x\cos{\theta}+y\sin{\theta}|}{|\cos{(\theta - \theta_{0})}|}\, \Vert H \Vert \geq \Vert H \Vert\, \left|x\cos{\theta}+y\sin{\theta}\right|.
\]
If $x \cos{\theta} + y \sin{\theta} = 0$, then the nonnegativity of $H$ yields

\[
H(x,y) \geq 0 = \Vert H \Vert\, \left|x\cos{\theta}+y\sin{\theta}\right|.
\]
In any case, we derive the desired inequality.
\end{proof}

For the remainder of this section, it is convenient to consider the sphere of ${\cal H}$ restricted to ${\cal H}^P$ and ${\cal H}^D$, denoted respectively by

\[
\mathbb{S}({\cal H}^P) = \mathbb{S}({\cal H})\cap{\cal H}^P = \left\{H \in {\cal H}^P \colon \lVert H \rVert = 1\right\},
\]

\[
\mathbb{S}({\cal H}^D) = \mathbb{S}({\cal H})\cap{\cal H}^D = \left\{H \in {\cal H}^D \colon \lVert H \rVert = 1\right\}.
\]

\begin{propo}\label{T3degenerate} For any membrane $\Omega \subset \R^2$ and $p > 1$, we have

\[
\lambda_{1,p}^{\min}(\Omega) = \inf_{H \in \mathbb{S}({\cal H}^D)} \lambda_{1,p}^{H}(\Omega) = \inf_{\theta \in [0,\pi]} \lambda_{1,p}(0,L_{\theta}).
\]
Moreover, the infimum is attained if and only if $\Omega$ has optimal anisotropic design.
\end{propo}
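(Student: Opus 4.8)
The plan is to prove the two equalities and the attainment criterion by reducing everything to the one-parameter family of degenerate anisotropies and then invoking Theorem~\ref{T2}. Since $\s({\cal H}^D)\subset\s({\cal H})$, the inequality $\lambda_{1,p}^{\min}(\Omega)\le\inf_{H\in\s({\cal H}^D)}\lambda_{1,p}^H(\Omega)$ is automatic. For the reverse inequality I would take any $H\in\s({\cal H})$, apply Proposition~\ref{propbound} to obtain $H_0\in{\cal H}^D$ with $H\ge H_0$ and $\lVert H_0\rVert=\lVert H\rVert=1$, so that $H_0\in\s({\cal H}^D)$, and then use Property~\ref{PropertyP3} to get $\lambda_{1,p}^{H_0}(\Omega)\le\lambda_{1,p}^H(\Omega)$. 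Passing to the infimum over $H\in\s({\cal H})$ yields $\inf_{H\in\s({\cal H}^D)}\lambda_{1,p}^H(\Omega)\le\lambda_{1,p}^{\min}(\Omega)$, hence the first equality.

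Next I would identify $\s({\cal H}^D)$ explicitly. By Proposition~\ref{P1} every $H\in\s({\cal H}^D)$ has the form $H(x,y)=c\,\lvert\cos\theta\,x+\sin\theta\,y\rvert$ with $\theta\in[0,\pi]$, and since $\max\{\lvert\cos\theta\,x+\sin\theta\,y\rvert:\ x^2+y^2=1\}=1$, the condition $\lVert H\rVert=1$ forces $c=1$. Thus $\s({\cal H}^D)=\{H_\theta:\theta\in[0,\pi]\}$ with $H_\theta(x,y)=\lvert\cos\theta\,x+\sin\theta\,y\rvert$, and Theorem~\ref{T2} (applied with $c=1$) gives $\lambda_{1,p}^{H_\theta}(\Omega)=\lambda_{1,p}(0,L_\theta)$. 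Taking the infimum over $\theta\in[0,\pi]$ produces the second equality $\lambda_{1,p}^{\min}(\Omega)=\inf_{\theta\in[0,\pi]}\lambda_{1,p}(0,L_\theta)$.

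For the attainment claim I would use the scaling identity $\lambda_{1,p}(0,L)=L^{-p}\lambda_{1,p}(0,1)$, which makes $L\mapsto\lambda_{1,p}(0,L)$ continuous and strictly decreasing on $(0,\infty)$; hence $\inf_{\theta}\lambda_{1,p}(0,L_\theta)=\lambda_{1,p}(0,\bar L)$, where $\bar L:=\sup_{\theta\in[0,\pi]}L_\theta$ is finite and positive since $\Omega$ is a bounded domain. If $\Omega$ has optimal anisotropic design, so $L$ attains its maximum at some $\theta_0$, then $H_{\theta_0}\in\s({\cal H})$ realizes $\lambda_{1,p}^{H_{\theta_0}}(\Omega)=\lambda_{1,p}(0,\bar L)=\lambda_{1,p}^{\min}(\Omega)$. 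Conversely, if $\lambda_{1,p}^{\min}(\Omega)$ is attained at some $H\in\s({\cal H})$, Proposition~\ref{propbound} gives $H_0=H_{\theta_0}\in\s({\cal H}^D)$ with $\lambda_{1,p}^{H_0}(\Omega)\le\lambda_{1,p}^H(\Omega)=\lambda_{1,p}^{\min}(\Omega)$; the reverse inequality is automatic since $H_0\in\s({\cal H})$, so $\lambda_{1,p}(0,L_{\theta_0})=\lambda_{1,p}(0,\bar L)$ and strict monotonicity forces $L_{\theta_0}=\bar L$, i.e. $L$ has a global maximum. The bookkeeping in the first two steps is routine; the one point requiring care is the forward direction of the attainment equivalence, where a minimizer lying a priori anywhere in $\s({\cal H})$ must be pushed down onto a degenerate anisotropy — exactly the role of Proposition~\ref{propbound} — so no essential difficulty remains once that lemma and Theorem~\ref{T2} are in hand.
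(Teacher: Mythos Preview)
Your proposal is correct and follows essentially the same route as the paper's proof, which invokes Proposition~\ref{P1}, Proposition~\ref{propbound}, Property~\ref{PropertyP3} and Theorem~\ref{T2} to obtain the two equalities and then observes that the last infimum is attained exactly when the width function $L$ has a global maximum. Your write-up simply unpacks these steps in more detail; in particular, your handling of the converse in the attainment claim---pushing an arbitrary minimizer $H\in\s({\cal H})$ down to some $H_{\theta_0}\in\s({\cal H}^D)$ via Proposition~\ref{propbound} and then using strict monotonicity of $L\mapsto\lambda_{1,p}(0,L)$---is exactly the argument the paper leaves implicit.
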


\begin{proof} The two equalities follow readily from the definition of $\lambda_{1,p}^{\min}(\Omega)$ and from Propositions \ref{P1} and \ref{propbound} and Theorem \ref{T2}. On the other hand, the last infimum is attained on $[0, \pi]$ if and only if the width function $L: [0, \pi] \rightarrow \R$ has a global maximum point, but this just means that $\Omega$ has optimal anisotropic design.
\end{proof}

\begin{propo}\label{T3degenerate1}
Let $\Omega$ be a $C^{0,1}$ membrane and let $p > 1$. For any $H \in \mathbb{S}({\cal H}^P)$ and $H_0 \in \mathbb{S}({\cal H}^D)$ such that $H \geq H_0$, it holds that $\lambda_{1,p}^{H}(\Omega) > \lambda_{1,p}^{H_0}(\Omega)$. In particular, $\lambda_{1,p}^{\min}(\Omega)$ has at most anisotropic extremizers in $\mathbb{S}({\cal H}^D)$.
\end{propo}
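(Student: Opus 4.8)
The plan is to argue by contradiction: suppose $\lambda_{1,p}^{H}(\Omega) = \lambda_{1,p}^{H_0}(\Omega)$ for some $H \in \mathbb{S}({\cal H}^P)$ and $H_0 \in \mathbb{S}({\cal H}^D)$ with $H \geq H_0$. Since $H$ is a norm on $\R^2$ it is comparable to $|\cdot|$, so a standard direct-method argument (coercivity, weak lower semicontinuity of the convex functional $v \mapsto \iint_\Omega H^p(\nabla v)\,dA$, and compactness of $W^{1,p}_0(\Omega) \hookrightarrow L^p(\Omega)$) furnishes a nonnegative, $L^p$-normalized minimizer $u$ of $\lambda_{1,p}^{H}(\Omega)$. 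By Proposition \ref{P1} and Property \ref{PropertyP4} we may rotate so that $H_0(x,y) = |y|$; then $H(x,y) \geq |y|$ while $\Vert H \Vert = 1$ forces $H(\xi) \leq |\xi|$. As $u$ is admissible in the $H_0$-problem and $H \geq H_0 \geq 0$,
\[
\lambda_{1,p}^{H_0}(\Omega) \leq \iint_\Omega H_0^p(\nabla u)\,dA \leq \iint_\Omega H^p(\nabla u)\,dA = \lambda_{1,p}^{H}(\Omega) = \lambda_{1,p}^{H_0}(\Omega),
\]
so equality holds throughout. Hence $u$ is also a minimizer of $\lambda_{1,p}^{H_0}(\Omega)$, and since $H^p(\nabla u) - H_0^p(\nabla u) \geq 0$ integrates to zero we get $H(\nabla u) = |\partial_y u|$ a.e.\ in $\Omega$; equivalently $\nabla u$ lies a.e.\ in the closed cone $S := \{\xi \in \R^2 : H(\xi) = |\xi_2|\}$.

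Here the hypothesis $H \in {\cal H}^P$ is essential: since $H$ is a genuine norm, $S \cap \{\xi_2 = 0\} = \{0\}$, and one checks that $S$ is a convex cone inside each half-plane $\{\pm\xi_2 \geq 0\}$, so there is a constant $M \geq 0$ with $S \subseteq \{|\xi_1| \leq M|\xi_2|\}$; consequently $|\partial_x u| \leq M|\partial_y u|$ a.e.\ in $\Omega$. On the other hand, being a minimizer of $\lambda_{1,p}^{H_0}(\Omega)$ (whose energy is of class $C^1$ because $p>1$), $u$ is a membrane eigenfunction, so Theorem \ref{T1} gives conditions $(i)$–$(iii)$ with some $L>0$ and $\lambda_{1,p}^{H_0}(\Omega) = \lambda_{1,p}(0,L)$, and Remark \ref{eigencharrem} describes the maximal positivity subdomain $\Omega_0 = \{(x,y) \in \Omega : g(x) < y < g(x)+L\}$ on which $u(x,y) = \varphi(x)\psi_p(y-g(x))$, where $\psi_p \in C^1[0,L]$ is the positive first eigenfunction on $(0,L)$ and $\varphi \in W^{1,p}_0(I_0)$; since $u>0$ on $\Omega_0$, $\varphi > 0$ on $I_0$, hence $\varphi$ vanishes at $\partial I_0$ and is non-constant. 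This is where the $C^{0,1}$ regularity of $\partial\Omega$ enters: the bottom of $\Omega_0$ lies on $\partial\Omega$, so $g$ may be taken locally Lipschitz, and we restrict $I_0$ to an interval $[x_1,x_2]$ on which $g$ is Lipschitz, $\varphi \geq \varphi_0 > 0$, and $\varphi(x_1) \neq \varphi(x_2)$.

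The main obstacle is to turn the pointwise estimate $|\partial_x u| \leq M|\partial_y u|$ into a contradiction with the non-constancy of $\varphi$. Let $m \in (0,L)$ be the maximum point of $\psi_p$, so $\psi_p'(m) = 0 < \psi_p(m)$, and put $\gamma(x) = g(x)+m$. The delicate point is that $\partial_y u$ vanishes only on the $2$-dimensional null set $\Gamma = \{y = \gamma(x)\}$, so the a.e.\ inequality cannot be tested on $\Gamma$ directly; the idea is to thicken it. For small $h,\eta > 0$ one integrates $\partial_x u$ over the box $(x,x+h) \times (\gamma(x)-\eta, \gamma(x)+\eta)$: using $|\partial_x u(s,y)| \leq M\varphi(s)|\psi_p'(y-g(s))|$ together with the bound $|\psi_p'| \leq \omega(\eta + \mathrm{Lip}(g)h)$ on that box ($\omega$ the modulus of continuity of $\psi_p'$ at $m$, $\omega(0^+)=0$) one controls the averaged $x$-increment
\[
\frac{1}{2\eta}\int_{\gamma(x)-\eta}^{\gamma(x)+\eta}\bigl(u(x+h,y) - u(x,y)\bigr)\,dy
\]
by $Mh\Vert\varphi\Vert_\infty\,\omega(\eta + \mathrm{Lip}(g)h)$. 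Letting $\eta \downarrow 0$, evaluating the left-hand side through $u(x,y)=\varphi(x)\psi_p(y-g(x))$, absorbing into $o(h)$ a remainder which is small precisely because $\psi_p'(m)=0$, then dividing by $h$ and letting $h \downarrow 0$, one obtains $|\varphi'(x)|\,\psi_p(m) \leq 0$ for a.e.\ $x \in (x_1,x_2)$. Thus $\varphi$ is constant on $(x_1,x_2)$, contradicting $\varphi(x_1) \neq \varphi(x_2)$. This proves $\lambda_{1,p}^{H}(\Omega) > \lambda_{1,p}^{H_0}(\Omega)$.

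Finally, for the last assertion let $H \in \mathbb{S}({\cal H}^P)$ be arbitrary. By Proposition \ref{propbound} there is $H_0 \in \mathbb{S}({\cal H}^D)$ with $H_0 \leq H$, and the inequality just proved gives $\lambda_{1,p}^{H}(\Omega) > \lambda_{1,p}^{H_0}(\Omega) \geq \lambda_{1,p}^{\min}(\Omega)$, so $H$ is not an extremizer of $\lambda_{1,p}^{\min}(\Omega)$. Hence every extremizer of $\lambda_{1,p}^{\min}(\Omega)$ on $\mathbb{S}({\cal H})$ must lie in $\mathbb{S}({\cal H}^D)$.
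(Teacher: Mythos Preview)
Your argument is correct and shares the paper's scaffolding: assume equality, pass to a common minimizer $u$, rotate so that $H_0(x,y)=|y|$, invoke Remark~\ref{eigencharrem} to get $u(x,y)=\varphi(x)\psi_p(y-g(x))$ on a strip $\Omega_0$ with $g$ Lipschitz (here the $C^{0,1}$ hypothesis enters), and exploit the vanishing of $\psi_p'$ at its maximum. The core step is executed differently. The paper introduces $U=\{H>2H_0\}$ and shows by an inner contradiction that $W=(\nabla u)^{-1}(U)$ has positive measure: if $|W|=0$ then $|D_xu|$ is small wherever $|D_yu|$ is small, and integrating along curves $t\mapsto(t,g(t)+c)$ forces $u$ to vanish on the midline curve, which is absurd; the positivity of $|W|$ then yields the strict energy gap directly. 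You instead deduce from $\iint H^p(\nabla u)=\iint H_0^p(\nabla u)$ the pointwise constraint $\nabla u\in S=\{H=H_0\}$ a.e., observe that $S$ is a closed cone meeting $\{\xi_2=0\}$ only at the origin (since $H$ is a genuine norm), hence $|\partial_x u|\le M|\partial_y u|$ a.e., and a box-averaging near the midline gives the uniform bound $|\varphi(x+h)-\varphi(x)|=o(h)$, so $\varphi$ is constant, contradicting $\varphi\in W^{1,p}_0(I_0)$ positive. Your route is a bit more direct, replacing the paper's nested contradiction and the auxiliary function $\varepsilon(\cdot)$ by a single clean cone inclusion; the paper's version, on the other hand, exhibits explicitly where the strict energy excess on $W$ comes from. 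Both rely in the same way on the Lipschitz regularity of $g$ and on the structure claims of Remark~\ref{eigencharrem}.
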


\begin{proof} Let $H \in \mathbb{S}({\cal H}^P)$ and $H_0 \in \mathbb{S}({\cal H}^D)$ be as in the statement and assume by contradiction that $\lambda_{1,p}^{H}(\Omega) = \lambda_{1,p}^{H_0}(\Omega)$. By Proposition \ref{P1} and Property \ref{PropertyP4}, it suffices to suppose that $H_0(x,y) = |y|$. Since $H \in {\cal H}^P$, there is a minimizer $u_p \in W^{1,p}_{0}(\Omega)$ for $\lambda_{1,p}^{H}(\Omega)$. From the inequalities

\[
\lambda_{1,p}^{H_0}(\Omega) = \lambda_{1,p}^{H}(\Omega) = \iint_{\Omega} H^{p}(\nabla u_p)\, dA \geq  \iint_{\Omega} H_{0}^{p}(\nabla u_p)\, dA \geq \lambda_{1,p}^{H_0}(\Omega),
\]
it follows that $u_p$ is a minimizer for $\lambda_{1,p}^{H_0}(\Omega)$ as well. Thus, by Remark \ref{eigencharrem}, there is a number $L > 0$, an interval $I_0 = (a,b)$, a Lipschitz function $g \colon I_0 \to \R$ (since $\partial \Omega \in C^{0,1}$), a sub-membrane
\[
\Omega_0 = \left\{(x,y) \in \Omega \colon  g(x) < y < g(x) + L\right\}
\]
and a function $\varphi \in W_0^{1,p}(I_0)$ with $\Vert \varphi \Vert_p = 1$ where $u_p(x,y) = \varphi(x)\psi_{p}(y - g(x))$ is positive in the interior of $\Omega_0$ and zero on the boundary of $\Omega_0$. Here, $\psi_p$ denotes the positive eigenfunction for the one-dimensional $p$-laplacian on $(0,L)$ with $\Vert \psi_p \Vert_p = 1$. Therefore,

\[
D_y u_p:= \frac{\partial u_p}{\partial y} \in C(\Omega_{0})\setminus\{0\}
\]
and it is zero only on the curve $\left\{ (x,y) \in \Omega \colon y-g(x) = L/2\right\}$. Hence, $D_y u_p$ is nonzero almost everywhere in $\Omega_{0}$.

Since $H$ is continuous and $H(x,0) > 0 = 2H_{0}(x,0)$ for all $x \neq 0$, then

\[
U := \left(H-2H_0\right)^{-1}(0,+\infty)
\]
is a nonempty open set. Consider now the set

\[
W := (\nabla u_p)^{-1} (U).
\]
We claim that $W$ has positive Lebesgue measure. To check this, first define for any $x \neq 0$:

\[
\varepsilon(x) := \sup \left\{y>0 \colon (x,y) \in U\right\}.
\]
Clearly, this function satisfies $\varepsilon(-x) = \varepsilon(x)$ and $\{x\} \times (-\varepsilon(x), \varepsilon(x)) \subset U$. Besides, thanks to the homogeneity of $H - 2H_0$, one has $\varepsilon(tx) = t\varepsilon(x)$ for every $t>0$ and $x \neq 0$ and so $\varepsilon(x_2) \geq \varepsilon(x_1)$ whenever $|x_2| \geq |x_1|$.

Now see that, since there is $(x_0,y_0) \in \Omega_0$ such that $D_y u_p (x_0,y_0) = 0$ and due to continuity of $D_y u_p$, the open set $A_{\eta} = (D_y u_p)^{-1}(-\eta,\eta)$ is nonempty in $\Omega_0$ for every $\eta > 0$.

Set $D_x u_p:= \frac{\partial u_p}{\partial x}$ and define $B_{\delta} = \{(x,y) \in \Omega_{0} \colon | D_x u_p(x,y)| > \delta\}$ for each $\delta > 0$. Using the fact that $[-\delta,\delta]^{c} \times (-\varepsilon(\delta),\varepsilon(\delta)) \subset U$, we have
\[
W = (\nabla u_p)^{-1} (U) \supset (\nabla u_p)^{-1}\left([-\delta,\delta]^{c} \times (-\varepsilon(\delta),\varepsilon(\delta))\right) \supset B_{\delta} \cap A_{\varepsilon(\delta)}.
\]
Then, it suffices to prove that $B_{\delta} \cap A_{\varepsilon(\delta)}$ has positive Lebesgue measure for some $\delta > 0$.

Assume by contradiction that $|B_{\delta} \cap A_{\varepsilon(\delta)}| = 0$ for every $\delta > 0$. Setting $f(t) = (t,g(t)+y-g(x))$ and using that $g$ is Lipschitz, for any point $(x,y) \in \Omega_{0}$, we have

\begin{equation} \label{truc}
u_{p}(x,y) = u_{p}(x,g(x)+y-g(x)) = \int_{a}^{x} (u_p \circ f)'(t) \, dt = \int_{a}^{x} (D_x u_p)(f(t)) + (D_y u_p)(f(t)) g'(t) \, dt.
\end{equation}
Now define $\omega(\delta)$ as the positive number such that

\[
\left(\psi_{p}^\prime\right)^{-1}(-\varepsilon(\delta)\, \lVert \varphi \rVert_{\infty}^{-1},\, \varepsilon(\delta)\, \lVert \varphi \rVert_{\infty}^{-1}) = \left(L/2 -\omega(\delta),L/2+\omega(\delta)\right).
\]
Note that both $\varepsilon(\delta)$ and $\omega(\delta)$ converge to $0$ as $\delta \rightarrow 0$. From the equality \eqref{truc}, we get

\begin{align*}
\int_{a}^{b}\int_{g(x)+L/2-\omega(\delta)}^{g(x)+L/2+\omega(\delta)} |u_p (x,y)| \, dy \, dx & = \int_{a}^{b}\int_{g(x)+L/2-\omega(\delta)}^{g(x)+L/2+\omega(\delta)} \left|\int_{a}^{x} (D_x u_p)(f(t)) + (D_y u_p)(f(t)) g'(t) \ dt\right| \, dy \, dx \\
& \leq \int_{a}^{b}\int_{g(x)+L/2-\omega(\delta)}^{g(x)+L/2+\omega(\delta)}\int_{a}^{x} |D_x u_p(f(t))| + |D_y u_p(f(t))||g'(t)| \, dt \, dy \, dx \\
& \leq \int_{a}^{b}\int_{g(x)+L/2-\omega(\delta)}^{g(x)+L/2+\omega(\delta)}\int_{a}^{b} |D_x u_p(f(t))| + |D_y u_p(f(t))||g'(t)| \, dt \, dy \, dx.
\end{align*}
On the other hand, from the definition of $A_{\varepsilon(\delta)}$, we have

\[
\{(x,y) \in \R^2 \colon\ a < x < b,\ g(x)+L/2-\omega(\delta) < y < g(x)+L/2+\omega(\delta)\} \subset A_{\varepsilon(\delta)}
\]
and therefore $f(t) \in A_{\varepsilon(\delta)}$ for every $t \in (a,b)$. Then, since $|B_{\delta} \cap A_{\varepsilon(\delta)}| = 0$, the previous estimate yields

\begin{align*}
  \int_{a}^{b}\int_{g(x)+L/2-\omega(\delta)}^{g(x)+L/2+\omega(\delta)} |u_p (x,y)| \, dy \, dx & \leq \int_{a}^{b}\int_{g(x)+L/2-\omega(\delta)}^{g(x)+L/2+\omega(\delta)}\int_{a}^{b} \delta + \varepsilon(\delta)|g'(t)| \, dt \, dy \, dx \\
  & = \int_{a}^{b}\int_{g(x)+L/2-\omega(\delta)}^{g(x)+L/2+\omega(\delta)} \delta (b-a) + \varepsilon(\delta)\lVert g' \rVert_{1} \, dy \, dx \\
  & = (b-a)2\omega(\delta)\left(\delta (b-a) + \varepsilon(\delta)\lVert g' \rVert_{1}\right).
\end{align*}
Dividing both sides by $2\omega(\delta)$, we have

\[
\int_{a}^{b} \frac{1}{2\omega(\delta)}\int_{g(x)+L/2-\omega(\delta)}^{g(x)+L/2+\omega(\delta)} |u_p (x,y)| \, dy \, dx \leq (b-a)\left(\delta (b-a) + \varepsilon(\delta)\lVert g' \rVert_{1}\right),
\]
and letting $\delta \rightarrow 0$, one obtains

\[
\int_{a}^{b} |u_{p}(x,g(x)+L/2)| \, dx = 0,
\]
which gives a contradiction since the function $u_{p}(x,g(x)+L/2)$ is positive for every $x \in (a,b)$. Consequently, there is $\delta > 0$ such that $|B_{\delta} \cap A_{\varepsilon(\delta)}| > 0$ and thus $W$ has positive measure.

Finally, with the aid of this last fact, we derive the contradiction

\begin{align*}
    \lambda_{1,p}^{H}(\Omega) &= \iint_{\Omega} H^{p}(\nabla u_p)\, dA = \iint_{\Omega\setminus W} H^{p}(\nabla u_p)\, dA + \iint_{W} H^{p}(\nabla u_p)\, dA \\  &\geq \iint_{\Omega\setminus W} H_{0}^{p}(\nabla u_p)\, dA + \iint_{W} 2^pH_{0}^{p}(\nabla u_p)\, dA \\
    &= \iint_{\Omega} H_{0}^{p}(\nabla u_p)\, dA + \iint_{W} (2^p-1)H_{0}^{p}(\nabla u_p)\, dA > \lambda_{1,p}^{H_{0}}(\Omega).
\end{align*}
and the proof of the proposition follows.
\end{proof}

As byproduct of the above developments, we have

\begin{proof}[Proof of Theorem \ref{T4}.] It follows from the combination between Propositions \ref{T3degenerate} and \ref{T3degenerate1}.
\end{proof}

\subsection{Proof of two shape inequalities}

In this last subsection we use the strength of Theorem \ref{T4} to give a brief proof of the shape inequalities \eqref{ID-min} and \eqref{IP-min}.

\begin{proof}[Proof of Theorem \ref{T5}]
By scaling property of $\lambda_{1,p}^{\min}(\Omega)$ with respect to $\Omega$, for the proof of the inequalities $(i)$ and $(ii)$ suffices to consider the respective normalized cases, namely, $\rm{diam}(\Omega) = 1$ and $\vert \Omega \vert = 1$.

By Theorem \ref{T4}, for any membrane $\Omega$, we know that

\[
\lambda_{1,p}^{\min}(\Omega) = \inf_{\theta \in [0,\pi]} \lambda_{1,p}(0, L_\theta) = \lambda_{1,p}(0,1) [\sup_{\theta \in [0, \pi]} L_\theta]^{-p}.
\]

For the proof of $(i)$, assume that $\Omega$ has unit diameter. Since $\sup_{\theta \in [0, \pi]} L_\theta \leq \rm{diam}(\Omega) = 1$, we have

\[
\lambda_{1,p}^{\min}(\Omega) \geq \lambda_{1,p}(0,1).
\]
Moreover, the equality holds if and only if $\sup_{\theta \in [0, \pi]} L_\theta = 1 = \rm{diam}(\Omega)$.

For the proof of $(ii)$, consider a convex membrane $\Omega$ with unit area. From the convexity, there is $\theta_0 \in [0, \pi]$ such that $L_{\theta_0} = \rm{diam}(\Omega)$. Therefore, using the above formula for $\lambda_{1,p}^{\min}(\Omega)$, we get

\[
\lambda_{1,p}^{\min}(\Omega) = \lambda_{1,p}(0, L_{\theta_0}) = \lambda_{1,p}(0, \rm{diam}(\Omega)).
\]
On the other hand, the classical isodiametric inequality gives us

\[
\rm{diam}(\Omega) \geq \frac{2}{\sqrt{\pi}},
\]
and thus

\[
\lambda_{1,p}^{\min}(\Omega) \leq \lambda_{1,p}(0, 2/\sqrt{\pi}).
\]
Moreover, equality holds if and only if the isodiametric inequality becomes equality, which in turn only occurs for disks.
\end{proof}

\vs{0.5cm}

\n {\bf Acknowledgments:} The first author was partially supported by CAPES (PROEX 88887.712161/2022-00) and the second author was partially supported by CNPq (PQ 307432/2023-8, Universal 404374/2023-9) and Fapemig (Universal APQ 01598-23).\\

\n {\bf Data availability:} Data sharing not applicable to this article as no datasets were generated or analyzed during the current study.\\

\n {\bf Conflict of interest:} The authors declare that there are no financial, competing or conflict of interest.

\end{document}